\newcommand{\norm}[1]{\left\Vert#1\right\Vert}
\newcommand{\dprod}[2]{\left\langle #1,#2 \right\rangle}
\newcommand{\vol}[1]{\text{Vol}\left( #1 \right)}
\newcommand{\bR}{\displaystyle \mathbb{R}}
\newcommand{\bP}{\displaystyle \mathbb{P}}
\newcommand{\cQ}{\displaystyle \mathcal{Q}}
\newcommand{\proj}{\displaystyle \textrm{Proj}}
\newtheorem{theorem}{Theorem}[section]
\newtheorem{Conjecture}[theorem]{Conjecture}
\newtheorem{corollary}[theorem]{Corollary}
\newtheorem{proposition}[theorem]{Proposition}
\newtheorem{lemma}[theorem]{Lemma}
\begin{document}
\title{\Large Sampling on the Sphere by Mutually Orthogonal Subspaces}
\author{Uri Grupel
\thanks{Weizmann Institute of Science. urigrupel@gmail.com}}
\date{}
\maketitle
\begin{abstract}
	\small\baselineskip=9pt
	The purpose of this paper is twofold.
	First, we provide an optimal $\Omega(\sqrt{n})$ bits lower bound for any two-way  protocol for the Vector in Subspace Communication Problem which is of bounded total rank. 
	This result complements Raz's $O(\sqrt{n})$ protocol, which has a simple variant of bounded total rank.
	Second, we present a plausible mathematical conjecture on a measure concentration phenomenon that implies an $\Omega(\sqrt{n})$ lower bound for a general protocol.
	We prove the conjecture for the subclass of sets that depend only on $O(\sqrt{n})$ directions.
\end{abstract}

	\section{Introduction}\label{sec:intro}
	
	The Vector in Subspace Problem ({\it VSP}) is a communication problem where one party (Alice) receives a unit vector $u\in S^{n-1}$, and a second party (Bob) receives a subspace $H\subseteq \bR^n$ of dimension $\lfloor n/2 \rfloor$ such that either $u\in H$ or $u\in H^{\perp}$. The goal of Alice and Bob is to determine whether $u\in H$ or not.\\
	
	VSP was introduced by Kremer \cite{kremer95} and has been studied under both classical and quantum communication models .
	In the classical communication model, Alice and Bob exchange bits between them in order to determine whether $u\in H$. In the quantum communication model, Alice and Bob exchange qubits.\\
	
	In this paper, the terms protocol and complexity refer to distributional complexity (see \cite{yao79} \cite{babai+frankl+simon86}). That is, a protocol outputs the correct answer with probability at least $2/3$. 
	The complexity is measured according to the number of bits or qubits that are exchanged in the worst case .\\
	
	It is known that VSP can be solved in the quantum model with the exchange of $O(\log n)$ qubits. In \cite{raz99} Raz presented a classical protocol that solves VSP with the exchange of $O(\sqrt{n})$ bits. In \cite{klartag+regev11}, Klartag and Regev proved that any classical protocol for VSP has a communication complexity of at least $\Omega(n^{1/3})$ bits. Thus, VSP shows
	that quantum communication can be exponentially stronger than classical communication. In this paper we discuss the gap between the lower and upper bound for the classical model.\\
	
	We focus on the class of protocols of {\it bounded total rank}.
	In a deterministic protocol, each decision by Alice on the value of the next bit to be sent to Bob is based on two factors:
	her knowledge of the communication received so far, and perhaps an additional measurement of the vector $u$.
	We define the rank of the decision to be the number of linear functionals of the vector $u$ that Alice has to compute in order to carry out the measurement.
	For example, deciding the value of the next bit by using the indicator function of the set $\{\dprod{x}{v_1}\geq 0, \sin(\dprod{x}{v_2}^2)\geq 1/2\}$,
	where $v_1,v_2\in\bR^n$ are determined by the communication received so far, is a decision of rank 2. 
	In general, the rank of a decision is an integer between 0 and n.
	The total rank of a protocol is the sum of all ranks of decisions
	made by Alice in the worst case scenario. Note that we do not count decisions by Bob. 
	The protocol Raz introduced can be slightly modified to be of total rank $O(\sqrt{n})$ (for more details see Appendix \ref{app:protocol}).\\
	
	We prove that any protocol of VSP, for the classical model, of total rank at most $O(\sqrt{n})$ has communication complexity of at least $\Omega(\sqrt{n})$ bits. In light of the upper bound by Raz, this lower bound is sharp. We also introduce a novel mathematical conjecture about concentration of measure in the high dimensional sphere. This conjecture implies that any classical protocol for VSP has a communication complexity of at least $\Omega(\sqrt{n})$ bits.\\
	
	The lower bound by Klartag and Regev is a result of a concentration theorem for sampling on the sphere by random subspaces. They proved that for any measurable subset $A\subseteq S^{n-1}$ with $\sigma_{n-1}(A)\geq Ce^{-cn^{1/3}}$, where $\sigma_{n-1}$ is the uniform probability measure on the sphere $S^{n-1}$, it holds that
	$$\bP_{H}\left(\left|\frac{\sigma_H(A\cap H)}{\sigma_{n-1}(A)}-1\right|\leq 0.1\right)\geq 1-e^{-c'n^{1/3}},$$
	where $C,c,c'>0$ are universal constants.
	Here $\sigma_H$ denotes the Haar probability measure on $S^{n-1}\cap H$ and $\bP_H$ denotes the orthogonally invariant Haar probability measure over the Grassmanian manifold of subspaces $H\subseteq \bR^n$ of dimension $\lfloor n/2\rfloor$.\\
	
	The concentration inequality by Klartag and Regev is sharp. Taking $A=\{x\in S^{n-1};\;x_1\geq T\}$, where $T\approx n^{-1/3}$ is chosen such that $\sigma_{n-1}(A)=n^{-1/3}$ gives
	$$\bP_{H}\left(\left|\frac{\sigma_H(A\cap H)}{\sigma_{n-1}(A)}-1\right|\leq 0.1\right)= 1-e^{-c_n n^{1/3}},$$
	where $c_n$ has a finite limit $c\in(0,\infty)$ as $n\rightarrow\infty$.\\
	
	Our goal is to find a concentration inequality that applies to smaller sets, that is sets with measure of the order of magnitude of $e^{-\sqrt{n}}$. Our hope is that by considering both $H$ and $H^{\perp}$ simultaneously, a stronger concentration result can be achieved.\\

	\begin{Conjecture} \label{conj:main}
		Let $A\subseteq S^{n-1}$ be a measurable subset with $\sigma_{n-1}(A)\geq e^{-c\sqrt{n}}$. Then
		$$\bP_{H}\left(\sqrt{\sigma_H(A\cap H)\sigma_{H^{\perp}}(A\cap H^{\perp})}\geq 0.9\sigma_{n-1}(A)\right) \geq 1-Ce^{-c'\sqrt{n}},$$
		where  $C,c,c'>0$ are universal constants.
	\end{Conjecture}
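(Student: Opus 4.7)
My plan is to reduce the general conjecture to the already-established special case where $A$ depends on only $O(\sqrt n)$ directions. First, I would normalize by setting $\mu=\sigma_{n-1}(A)$, $X(H)=\sigma_H(A\cap H)/\mu$ and $Y(H)=\sigma_{H^\perp}(A\cap H^\perp)/\mu$. By a Fubini-type identity $\bE X=\bE Y=1$, and the involution $H\mapsto H^\perp$ shows that $X$ and $Y$ are equidistributed; the goal becomes $\bP(XY\geq 0.81)\geq 1-Ce^{-c'\sqrt n}$. The underlying heuristic is that $X$ and $Y$ are strongly negatively correlated, since mass of $A$ depleted from $X$ must reappear in $Y$, so the product enjoys cancellations absent from the factors individually. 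This is already visible on halfspaces $A=\{x_1\geq T\}$: the product $\sigma_H(A\cap H)\sigma_{H^\perp}(A\cap H^\perp)$ is governed by the invariant $\|P_H e_1\|\cdot\|P_{H^\perp}e_1\|$, which concentrates on the Grassmannian at an exponential scale unavailable to either factor.

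The next step would be a dimension reduction via harmonic analysis. Expanding $\mathbf 1_A=\sum_k h_k$ in spherical harmonics, one writes $\sigma_H(A\cap H)=\sum_k \lambda_{k,n}\,h_k^H$ with scalars $\lambda_{k,n}$ coming from the restriction-to-subspace operator, and a dual formula for $\sigma_{H^\perp}(A\cap H^\perp)$ with alternating parity behaviour. Because $\mu\geq e^{-c\sqrt n}$, I would hope that a log-Sobolev or hypercontractive bound on the sphere forces the essential mass of $\mathbf 1_A$ to sit on harmonics of degree at most $O(\sqrt n)$. Truncating to these low degrees — or equivalently keeping only the top $O(\sqrt n)$ eigenvectors of the covariance matrix
\[
\Sigma_{ij}=\int_{S^{n-1}} x_i x_j\,\mathbf 1_A(x)\,d\sigma_{n-1}(x)
\]
— should yield an approximation of $\mathbf 1_A$ by a function essentially depending on $O(\sqrt n)$ coordinates. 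One then invokes the paper's own theorem on such sets and propagates the approximation error through the $L^2$-Lipschitz behaviour of $X,Y$ on the Grassmannian.

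The main obstacle is precisely this dimension reduction. At the Klartag--Regev scale $e^{-cn^{1/3}}$ the set is large enough to be effectively smooth, but at the finer scale $e^{-c\sqrt n}$ the set could be very thin and oscillatory, with essential dimension as large as $n^{1-\delta}$; there is no obvious mechanism forcing it to live on $O(\sqrt n)$ directions. A complementary attack — computing $\bE[\sigma_H(A\cap H)\sigma_{H^\perp}(A\cap H^\perp)]$ and higher moments directly by Weingarten-type calculus on the orthogonal group, and converting the variance bound (which should be small because of the $H\leftrightarrow H^\perp$ antisymmetry on each harmonic) into an exponential tail via concentration on $SO(n)$ — runs into the same difficulty, since bounding a single moment is not enough to reach the target exponent $\sqrt n$ without a structural assumption on $A$. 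This is, I expect, exactly why the paper treats the low-dimensional sub-case only and leaves the general conjecture open.
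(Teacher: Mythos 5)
The statement you are addressing is Conjecture~\ref{conj:main}, which the paper does not prove: it is left open, and Theorem~\ref{thm:kcoordinates} (proved in \S\ref{sec:evidence}) handles only the special case where $A$ depends on $O(\sqrt n)$ coordinates. Your proposal correctly recognizes this --- there is no proof in the paper to compare against --- and you are also right to single out the dimension reduction as the bottleneck. There is no known hypercontractive or log-Sobolev mechanism forcing a set of measure as small as $e^{-c\sqrt n}$ to have its indicator function essentially supported on spherical harmonics of degree $O(\sqrt n)$; a thin, oscillatory set at this scale can have essential spectral support far beyond that. This is precisely why the Klartag--Regev argument stalls below the $e^{-cn^{1/3}}$ threshold, and your own last paragraph is an honest account of that.

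One additional gap is worth flagging in the reduction route you sketch, even granting the spectral truncation. Theorem~\ref{thm:kcoordinates} applies to functions that literally depend on the first $k$ \emph{coordinates}, i.e.\ $f(x)=g(x_1,\dots,x_k)$ after an orthogonal change of basis. This is not the same class as functions whose spherical-harmonic expansion is concentrated in degrees $\leq k$: a function of $k$ coordinates has harmonic components of every degree, and a single degree-$k$ harmonic generically involves all $n$ variables. So truncating to low harmonics, or keeping the top eigenvectors of the covariance matrix $\Sigma$, does not by itself produce an object that Theorem~\ref{thm:kcoordinates} accepts --- it produces the right kind of structure only when $\mathbf 1_A$ already concentrates near a low-dimensional slab, which is the missing hypothesis. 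Your ancillary observations (that $\bE X=\bE Y=1$, that $X$ and $Y$ are equidistributed via $H\mapsto H^\perp$, and that the problem reduces to a lower tail bound on $XY$) are correct and consistent with the paper's framing, but the conjecture remains exactly as open as you conclude it is.
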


	Conjecture \ref{conj:main} was essentially suggested by Klartag and Regev \cite{klartag11}, albeit with a weaker arithmetic average in place of the geometric one.\\
	
	In \S \ref{sec:evidence} we prove a special case of the conjecture where the set $A\subseteq S^{n-1}$ is of the form $\{x\in S^{n-1};\; (x_1,\ldots,x_k)\in I\}$ for some Borel set $I\subseteq B_k=\{x\in\bR^k;\;|x|\leq 1\}$, and $k=O(\sqrt{n})$. By considering the case $k=1$, this result shows that the conjecture holds for the extremal case of the theorem by Klartag and Regev. This extremal case also shows that if the conjecture is true it is tight.\\
	
	This special case of the conjecture follows from the following result:

	\begin{theorem}\label{thm:kcoordinates}
		Let $k\leq \alpha_1\sqrt{n}$. Let $f:S^{n-1}\to[0,\infty)$ be a measurable function such that $\norm{f}_{\infty}\leq e^{\alpha_2\sqrt{n}}$, $\norm{f}_1=1$ and $f$ depends only on $x_1,\ldots,x_k$. Then,
		$$\bP_H\left(\sqrt{\int_{S_H}f(x)d\sigma_H(x)\int_{S_{H^{\perp}}}f(x)d\sigma_{H^{\perp}}(x)}\geq 0.9\right) \geq 1-\alpha_3e^{-\sqrt{n}},$$
		where $\alpha_1,\alpha_2,\alpha_3>0$ are universal constants.
	\end{theorem}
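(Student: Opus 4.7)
The proof plan rests on three ingredients: (i) reducing the problem to a statement about the pushforward measures of $\sigma_H$ and $\sigma_{H^\perp}$ onto $\bR^k$, (ii) controlling the $k\times k$ Gram matrix $M_H := (\langle P_H e_i, P_H e_j\rangle)_{i,j\le k}$ that determines these pushforwards, and (iii) exploiting the orthogonal decomposition of $\sigma_{n-1}$ relative to $\bR^n = H \oplus H^\perp$ to link the two pushforward integrals.

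Writing $f(x) = g(x_1,\ldots,x_k)$, one has
$$X(H) := \int_{S_H} f\,d\sigma_H = \int_{B_k} g\,d\mu_H,\qquad Y(H) := \int_{S_{H^\perp}} f\,d\sigma_{H^\perp} = \int_{B_k} g\,d\mu_{H^\perp},$$
where $\mu_H$ is the first-$k$-coordinate pushforward of $\sigma_H$. Parametrizing $S_H$ by an orthonormal basis shows that $\mu_H$ has density proportional to $\det(M_H)^{-1/2}\bigl(1-y^T M_H^{-1}y\bigr)^{(d-k-2)/2}$ on the ellipsoid $\{y:y^T M_H^{-1}y\le 1\}$, with $d=\lfloor n/2\rfloor$. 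Since $M_H + M_{H^\perp} = I_k$, both measures are functions of $M_H$ alone. Using the Gaussian representation $H=\mathrm{span}(Z_1,\ldots,Z_d)$ with $Z_i$ i.i.d.\ $\mathcal{N}(0,I_n)$, Davidson--Szarek bounds for $Z^T Z$ together with Gaussian concentration for the upper $k\times d$ block of $Z$ give $\bP(\|M_H-\tfrac12 I_k\|_{op}>\eta)\le Ce^{-c\sqrt n}$ for a suitable $\eta=\eta_n=O(n^{-1/4})$, provided $\alpha_1$ is chosen small.

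The orthogonal decomposition $x = a\xi' + \sqrt{1-a^2}\,\xi''$ of $x\sim\sigma_{n-1}$, with $a^2\sim\mathrm{Beta}(d/2,(n-d)/2)$ and $\xi'\sim\sigma_H$, $\xi''\sim\sigma_{H^\perp}$ jointly independent, when restricted to the first $k$ coordinates, yields the exact identity
$$1 = \int f\,d\sigma_{n-1} = \bE\!\left[g\bigl(aY' + \sqrt{1-a^2}\,Y''\bigr)\right],$$
where $Y':=\xi'|_{[1:k]}\sim\mu_H$ and $Y'':=\xi''|_{[1:k]}\sim\mu_{H^\perp}$ are independent conditional on $H$. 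On the good event from step (ii), both $\mu_H$ and $\mu_{H^\perp}$ are close to a common reference measure $\nu$ (the density obtained by setting $M=\tfrac12 I_k$), and $a^2$ concentrates at $1/2$. Using these two facts, the identity forces $\bE_\nu g \approx 1$, and conditional independence then gives $X(H)Y(H) = \bE[g(Y')]\cdot\bE[g(Y'')] \ge 0.81$ on the good event, as required.

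The main obstacle is the quantitative comparison in step (iii): since $\|g\|_\infty$ may be as large as $e^{\alpha_2\sqrt n}$, an $L^1$ or total-variation comparison of $\mu_H$ to $\nu$ is hopelessly weak. The plan is to work in $L^2(\nu)$, decompose $g = g\mathbf 1_{\{g\le M\}} + g\mathbf 1_{\{g>M\}}$ at a threshold $M$ dictated by $\alpha_1$ and $\alpha_2$, use the constraint $\int g\,d\mu = 1$ to bound the $\mu$-measure of the tail $\{g>M\}$, and perform the density comparison only on the truncated piece where the exponential density ratio between $\mu_H$ and $\nu$ is manageable. The smallness of $\alpha_1$ (governing $k$) and of $\alpha_2$ (governing $\|f\|_\infty$) is dictated precisely by the requirement that all resulting error terms beat $e^{-\alpha_2\sqrt n}$.
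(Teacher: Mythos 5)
Your steps (i) and (ii) are a faithful reparametrization of what the paper does: the pushforward $\mu_H$ to $\bR^k$ with density $\propto(1-y^TM_H^{-1}y)^{(d-k-2)/2}$, and concentration $\|M_H-\tfrac12 I\|_{op}=O(n^{-1/4})$, are exactly the content of Propositions \ref{prop:distribution}--\ref{prop:singular2} (with $M_H=U\Lambda^2U^T$). The gap is in step (iii), and it is not the one you flag. You propose to show $\bE_{\mu_H}[g]$ and $\bE_{\mu_{H^\perp}}[g]$ are each individually close to $\bE_\nu[g]$, then multiply. But individual closeness fails at the required quantitative level, and this has nothing to do with $\|g\|_\infty$ being large: the eigenvalue fluctuation of $M_H$ is $\Theta(n^{-1/4})$, so at the relevant scale $|y|^2\sim k/n\sim n^{-1/2}$ the perturbation $y^T(M_H^{-1}-2I)y$ is of order $n^{-3/4}$, and after the $(\cdot)^{(d-k-2)/2}\approx(\cdot)^{n/4}$ power the log-density ratio between $\mu_H$ and $\nu$ is $\Theta(n^{1/4})$, not $O(1)$. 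So $\bE_{\mu_H}[g]/\bE_\nu[g]$ can be as large as $e^{\Theta(n^{1/4})}$ even for $g=\mathbf 1_A$, and truncation or an $L^2(\nu)$ comparison does not rescue this, since $\|d\mu_H/d\nu\|_{L^2(\nu)}$ is of the same bad order.

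The phenomenon the theorem rests on is exactly that \emph{neither} $\bE_{\mu_H}[g]$ nor $\bE_{\mu_{H^\perp}}[g]$ is close to $1$, yet their product is. The paper accesses this by applying Cauchy--Schwarz \emph{before} any density comparison, turning $\sqrt{\bE_{\mu_H}[g]\,\bE_{\mu_{H^\perp}}[g]}$ into a \emph{single} integral of $g$ against the geometric mean of the two densities (display (\ref{equation:cs})). At that point Corollary \ref{cor:sum} kicks in: writing $M_H=\tfrac12 I+T$, one has $M_H^{-1}+M_{H^\perp}^{-1}=(\tfrac12 I+T)^{-1}+(\tfrac12 I-T)^{-1}=4I+O(\|T\|^2)$, so the $O(n^{-1/4})$ first-order terms \emph{cancel}, the log-density ratio of the geometric-mean density to $(1-|y|^2)^{(n-k-2)/2}$ drops to $O(n\cdot n^{-1/2}\cdot n^{-1/2})=O(1)$, and the comparison becomes uniform. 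This cancellation is the theorem's mechanism and your write-up does not contain it; you even assert $X(H)Y(H)=\bE[g(Y')]\bE[g(Y'')]\ge 0.81$ ``on the good event'' by making each factor close to $\bE_\nu[g]$, which is precisely the false intermediate claim. To repair the plan you would need to couple the two integrals before comparing densities (as the paper does), rather than after.

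A secondary issue: your exact identity $1=\bE\bigl[g(aY'+\sqrt{1-a^2}Y'')\bigr]$ involves the law of a \emph{convolution} of $\mu_H$ and $\mu_{H^\perp}$, not $\nu$ itself, so deducing $\bE_\nu[g]\approx 1$ from it requires yet another density comparison subject to the same $n^{1/4}$ obstruction. In the paper this step is bypassed entirely: after the Cauchy--Schwarz rearrangement and the cancellation, the integral is compared directly to $\int_{S^{n-1}}f\,d\sigma_{n-1}=1$ via Proposition \ref{prop:coarea}, with the mass outside $\rho n^{-1/4}B_k$ killed by the Laplace-method estimate of Proposition \ref{prop:large_dev} (this is where the $\|f\|_\infty\le e^{\alpha_2\sqrt n}$ hypothesis and the smallness of $\alpha_1,\alpha_2$ actually enter).
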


	In the proof we use various tools from Geometric Functional Analysis. We begin by reformulating the problem in terms of random matrices instead of random subspaces. We show that the event in Theorem \ref{thm:kcoordinates} strongly depends on the singular values of a random projection. Using results from the theory of Wishart matrices we show that these singular values are concentrated around their expected values.\\
	
	Next, we use the Cauchy-Schwarz inequality to define a smaller event than the one in the theorem.
	The use of the Cauchy-Schwarz inequality demonstrates how considering both $H$ and $H^{\perp}$ simultaneously can enhance the concentration results and is fundamental to our approach.\\
	
	Finally, we use the concentration results, and asymptotic tools such as the Laplace method in order to show that with high probability this smaller event holds true. In this last step we present bounds for the universal constants in Theorem \ref{thm:kcoordinates} which are, in principle, explicit.\\
	
	In \S \ref{sec:applications} we employ the rectangle method and show how Theorem \ref{thm:kcoordinates} implies a sharp lower bound for classical protocols of total rank at most $O(\sqrt{n})$.
	
	\begin{corollary}\label{cor:vsp}
		Let $\mathcal{P}$ be a protocol for the {\it Vector in Subspace Problem} of total rank at most $\alpha_1\sqrt{n}$, with probability of error which is at most a constant smaller than $\frac{1}{2}$. Then the communication complexity of $\mathcal{P}$ is $\Omega(\sqrt{n})$ bits.
	\end{corollary}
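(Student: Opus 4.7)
The plan is to combine the rectangle method, Yao's principle, and Theorem~\ref{thm:kcoordinates} via a Cauchy--Schwarz argument. Consider the hard distribution $\mu$ on pairs $(u,H)$: draw $H$ uniformly from the Grassmannian of $\lfloor n/2\rfloor$-dimensional subspaces, an unbiased bit $s\in\{0,1\}$, and $u$ uniform on $S^{n-1}\cap H$ if $s=0$ or on $S^{n-1}\cap H^{\perp}$ if $s=1$; let $\mu_0,\mu_1$ be the two conditional laws. By Yao's principle it suffices to treat deterministic protocols, and a deterministic $c$-bit protocol partitions $S^{n-1}\times\mathrm{Grass}$ into $N\leq 2^c$ combinatorial rectangles $R_i=A_i\times B_i$ with outputs $\ell_i\in\{0,1\}$, whose advantage is $\mathrm{Adv}=\sum_{i:\ell_i=0}[\mu_0(R_i)-\mu_1(R_i)]$.

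The total-rank assumption enters as follows: on the transcript $i$, Alice evaluates at most $k=\alpha_1\sqrt n$ linear functionals of $u$, so $A_i$ depends only on these $\leq k$ functionals. By orthogonal invariance of the Haar measures, we may change basis so that $A_i=\{u:(u_1,\dots,u_{k_i})\in E_i\}$ with $k_i\leq k$, which is the hypothesis of Theorem~\ref{thm:kcoordinates}. Write $x_i(H)=\sigma_H(A_i\cap H)$ and $y_i(H)=\sigma_{H^{\perp}}(A_i\cap H^{\perp})$, and split the rectangles into ``small'' ones with $\sigma_{n-1}(A_i)<e^{-\alpha_2\sqrt n}$ and ``large'' ones. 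The small rectangles contribute at most $2\cdot 2^c e^{-\alpha_2\sqrt n}$ to $\mathrm{Adv}$, which is $o(1)$ once $c\leq c_0\sqrt n$; for each large rectangle Theorem~\ref{thm:kcoordinates} applied to $f=\mathbf 1_{A_i}/\sigma_{n-1}(A_i)$ yields $\sqrt{x_iy_i}\geq 0.9\,\sigma_{n-1}(A_i)$ outside an exceptional set of measure $\leq\alpha_3 e^{-\sqrt n}$, and the union bound over $\leq 2^c$ large rectangles is harmless in the same range.

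The key step is a double Cauchy--Schwarz that uses $H$ and $H^{\perp}$ simultaneously. Writing $|x-y|=|\sqrt x-\sqrt y|(\sqrt x+\sqrt y)$ and applying Cauchy--Schwarz once in $\bE_H$ and once in the index $i$ gives
$$\sum_i|\mu_0(R_i)-\mu_1(R_i)|\leq\sqrt{\sum_i\bE_H\bigl[\mathbf 1_{B_i}(x_i+y_i-2\sqrt{x_iy_i})\bigr]}\cdot\sqrt{\sum_i\bE_H\bigl[\mathbf 1_{B_i}(\sqrt{x_i}+\sqrt{y_i})^2\bigr]}.$$
The second factor is at most $4$, since $(\sqrt x+\sqrt y)^2\leq 2(x+y)$ and the sum telescopes to $2(\mu_0+\mu_1)(S^{n-1}\times\mathrm{Grass})=4$. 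For the first factor, the partition identities $\sum_{i:H\in B_i}x_i(H)=\sum_{i:H\in B_i}y_i(H)=1$ together with the geometric-mean lower bound on large rectangles yield $\sum_i\bE_H[\mathbf 1_{B_i}\sqrt{x_iy_i}]\geq 0.9-o(1)$, so the first factor is $\leq 0.2+o(1)$. Assembling everything gives $\mathrm{Adv}\leq\sqrt{0.8}+o(1)$ whenever $c\leq c_0\sqrt n$, which already implies the lower bound $\Omega(\sqrt n)$ for any protocol whose error is below some fixed absolute constant in $(0,1/2)$.

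The delicate point I anticipate is that the constant $0.9$ in Theorem~\ref{thm:kcoordinates} is not close enough to $1$ for this argument to bound $\mathrm{Adv}$ by an arbitrarily small constant in one shot. To cover every error bound $\tfrac12-\varepsilon_0$ with $\varepsilon_0>0$ arbitrary, I would amplify the protocol by $O(1/\varepsilon_0^2)$ independent repetitions with majority vote: this multiplies both the communication and the total rank by a constant depending on $\varepsilon_0$, and choosing $\alpha_1$ sufficiently small (in terms of $\varepsilon_0$) ensures that the amplified total rank still fits the hypothesis of Theorem~\ref{thm:kcoordinates}. Applying the bound above to the amplified protocol and dividing by the amplification factor transfers the $\Omega(\sqrt n)$ lower bound back to the original.
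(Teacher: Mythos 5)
Your proposal is correct and is driven by the same ingredients as the paper's proof: reduction by Yao's principle to a partition of $S^{n-1}\times G_{n/2}$ into rectangles, the observation that the total-rank hypothesis makes each $A_i$ depend on at most $\alpha_1\sqrt n$ directions, Theorem~\ref{thm:kcoordinates} applied to $f=\mathbf{1}_{A_i}/\sigma_{n-1}(A_i)$ on the ``large'' rectangles, a Cauchy--Schwarz step exploiting both $H$ and $H^\perp$, and error amplification. What differs is the aggregation. The paper first establishes a per-rectangle corruption bound (Proposition~\ref{prop:big_sets} and Corollary~\ref{cor:rectangles}): $\sqrt{\mu_1(Q)\mu_2(Q)}\geq 0.8\mu_0(Q)-Ce^{-\alpha_2\sqrt n}$, sums it separately over the two output classes, and applies Cauchy--Schwarz once per class against the (amplified) error bound, obtaining $0.8-2^D Ce^{-\alpha_2\sqrt n}\leq \tfrac{2}{3}$. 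You instead bound the statistical distance $\sum_i|\mu_1(R_i)-\mu_2(R_i)|$ directly via the factorization $|x-y|=|\sqrt x-\sqrt y|(\sqrt x+\sqrt y)$ and a double Cauchy--Schwarz, arriving at $\mathrm{Adv}\leq\sqrt{0.8}+o(1)$. Your route is slightly more ``Klartag--Regev flavored'' (control an $\ell^1$ distance by an $\ell^2$ one), while the paper's per-rectangle bound is the standard BFS/Razborov corruption form; numerically the paper's aggregation is a bit less lossy (it only needs conditional error below roughly $0.16$ whereas yours needs roughly $0.05$), but both require amplification before the threshold is met. On the one point you flagged as delicate: the paper performs the same amplification without comment, and it indeed forces the $\alpha_1$ appearing in Corollary~\ref{cor:vsp} to be taken smaller than the $\alpha_1$ of Theorem~\ref{thm:kcoordinates} by the constant amplification factor (which depends on the assumed error gap $\varepsilon_0$). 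Your explicit treatment of this dependence is a welcome clarification rather than a weakness.
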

	
	Using the same methods, a positive resolution of Conjecture \ref{conj:main} would imply a sharp lower bound for a general classical protocol to VSP.
	
	\begin{theorem}\label{thm:vsp}
		Let $\mathcal{P}$ be a general protocol for the {\it Vector in Subspace Problem} with probability of error which is at most a constant smaller than $\frac{1}{2}$. If Conjecture \ref{conj:main} is true then the communication complexity of $\mathcal{P}$ is $\Omega(\sqrt{n})$ bits.
	\end{theorem}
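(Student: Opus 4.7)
The plan is to follow the rectangle method used in the proof of Corollary~\ref{cor:vsp}, simply substituting Conjecture~\ref{conj:main} for Theorem~\ref{thm:kcoordinates}. I take as the hard distribution the one in which $H$ is Haar-random in the Grassmannian of $\lfloor n/2\rfloor$-dimensional subspaces, and, with equal probability, $u$ is drawn uniformly from $S^{n-1}\cap H$ (``yes'' input) or from $S^{n-1}\cap H^{\perp}$ (``no'' input). A deterministic protocol of complexity $c$ partitions the input space into at most $2^c$ combinatorial rectangles $R=A\times\mathcal{B}$, and its error under the hard distribution equals $\sum_R\min(\mu_0(R),\mu_1(R))$, where
$$\mu_1(R)=\tfrac{1}{2}\int_{\mathcal{B}}\sigma_H(A\cap H)\,d\bP_H,\qquad \mu_0(R)=\tfrac{1}{2}\int_{\mathcal{B}}\sigma_{H^{\perp}}(A\cap H^{\perp})\,d\bP_H.$$
Using $\min(a,b)\geq ab/(a+b)$, the Cauchy--Schwarz inequality, and the identity $\sum_R(\mu_0(R)+\mu_1(R))=1$, the error is bounded below by $\bigl(\sum_R\sqrt{\mu_0(R)\mu_1(R)}\bigr)^{2}$.

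The heart of the argument is to lower bound $\sum_R\sqrt{\mu_0(R)\mu_1(R)}$ whenever $c$ is much smaller than $\sqrt{n}$. For each rectangle, a second use of Cauchy--Schwarz in the form $\int x\,d\bP\cdot\int y\,d\bP\geq(\int\sqrt{xy}\,d\bP)^{2}$ yields
$$\sqrt{\mu_0(R)\mu_1(R)}\;\geq\;\tfrac{1}{2}\int_{\mathcal{B}}\sqrt{\sigma_H(A\cap H)\,\sigma_{H^{\perp}}(A\cap H^{\perp})}\,d\bP_H.$$
I split the rectangles into ``large'' (those with $\sigma_{n-1}(A)\geq e^{-c_1\sqrt{n}}$, where $c_1$ is the constant in Conjecture~\ref{conj:main}) and ``small''. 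For a large rectangle, the conjecture asserts that the integrand exceeds $0.9\,\sigma_{n-1}(A)$ off a set of $H$'s of $\bP_H$-measure at most $Ce^{-c'\sqrt{n}}$. Summing over rectangles and using $\sum_R\sigma_{n-1}(A_R)\bP_H(\mathcal{B}_R)=1$ (the product measure of the rectangle partition) together with the trivial bound $\sum_R\sigma_{n-1}(A_R)\leq 2^c$ to absorb both the small-rectangle contribution and the bad-$H$ contribution, one obtains
$$\sum_R\sqrt{\mu_0(R)\mu_1(R)}\;\geq\;0.45\bigl(1-2^c e^{-c_1\sqrt{n}}-2^c C e^{-c'\sqrt{n}}\bigr).$$
Whenever $c\leq c_0\sqrt{n}$ for a small enough absolute constant $c_0$, this exceeds a positive constant, and so does the protocol's error.

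To handle the full hypothesis of an arbitrary error probability $\tfrac{1}{2}-\delta$ with $\delta>0$ a constant, I prepend the standard amplification step: majority-voting $O(\delta^{-2})$ independent runs produces a protocol of error at most, say, $0.01$ and complexity $O(\delta^{-2}c)$, to which the above lower bound then applies and yields $c=\Omega(\sqrt{n})$. The chief delicate point is the measure-theoretic bookkeeping in the continuous setting: one must verify that the rectangle decomposition of a protocol on continuous inputs is measurable, and that the partition identity together with the $2^c$ tails really combine to drive both exponential error terms to $o(1)$ in the regime $c=o(\sqrt{n})$. The two Cauchy--Schwarz applications and the boosting are routine; the only genuinely new ingredient beyond the proof of Corollary~\ref{cor:vsp} is the substitution of Conjecture~\ref{conj:main} for Theorem~\ref{thm:kcoordinates}.
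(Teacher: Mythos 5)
Your proposal is correct and follows essentially the same route as the paper, which proves Theorem~\ref{thm:vsp} by observing that Conjecture~\ref{conj:main} removes the $O(\sqrt{n})$-directions hypothesis from Proposition~\ref{prop:big_sets} and Corollary~\ref{cor:rectangles}, after which the rectangle-method proof of Corollary~\ref{cor:vsp} applies verbatim to a general protocol. Your $\min(a,b)\geq ab/(a+b)$ packaging of the error lower bound is a cosmetic reformulation of the paper's comparison of upper and lower bounds on $\sum_Q\sqrt{\mu_1(Q)\mu_2(Q)}$ (upper from the error hypothesis plus Cauchy--Schwarz over $\cQ_\pm$, lower from the conjecture), and your boosting and deterministic-reduction remarks correspond to the paper's ``repeated application'' and Yao's-principle steps.
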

	
	{\it Acknowledgement.} This paper was written under the supervision of Bo'az Klartag whose guidance, support and patience were invaluable.
	In addition, I would like to thank Sasha Sodin for useful discussions and suggestions, and Oded Regev for his remarks on an early draft of this paper. Supported by the European Research Council (ERC).
	
	\section{Applications to VSP}\label{sec:applications}
	
	In this section we prove that Theorem \ref{thm:kcoordinates} implies a lower bound of $O(\sqrt{n})$ for classical protocols of total rank at most $O(\sqrt{n})$. We also show that Conjecture \ref{conj:main} implies a lower bound of $O(\sqrt{n})$ for any classical protocol.
	In light of the result of Raz, if the conjecture is true then this bound is sharp.\\
	
	Theorem \ref{thm:kcoordinates} implies a special case of the conjecture for sets that depend only on $\alpha_1\sqrt{n}$ directions.
	For any such $A\subseteq S^{n-1}$ with $\sigma_{n-1}(A)\geq e^{-\alpha_2\sqrt{n}}$, define $f(x)=1_{A}(x)/\sigma_{n-1}(A)$.
	The function $f$ depends only on $\alpha_1\sqrt{n}$ directions, bounded by $1/\sigma_{n-1}(A)\leq e^{\alpha_2\sqrt{n}}$ and has $\norm{f}_1=1$. Hence, we may apply Theorem \ref{thm:kcoordinates}. We have,

	\begin{align*}
	1-\alpha_3e^{-\sqrt{n}}
	&\leq
	\bP_H\left(\sqrt{\int_{S_H}1_{A}(x)/\sigma_{n-1}(A)d\sigma_H(x)}
	\sqrt{\int_{S_{H^{\perp}}}1_{A}(x)/\sigma_{n-1}(A)d\sigma_{H^{\perp}}(x)}\geq 0.9\right)\\
	&=\bP_H\left(\sqrt{\sigma_H(A\cap H)\sigma_{H^{\perp}}(A\cap H^{\perp})}/\sigma_{n-1}(A)\geq 0.9\right).
	\end{align*}

	In this section, we use this consequence of Theorem \ref{thm:kcoordinates}.\\

	Our argument follows the rectangle method usually attributed to Babai, Frankl and Simon \cite{babai+frankl+simon86} and Razborov \cite{razborov11}.\\
	
	For simplicity, we assume that $n$ is even. We denote by $G_{n/2}$ the Grassmanian manifold of all subspaces of $\bR^n$ of dimension $n/2$, equipped with the $O_n$ invariant measure $\sigma_G$. Let $\mu_0$ be the uniform measure on $S^{n-1}\times G_{n/2}$.
	Denote 
	$$I_1=\left\{(u,H)\in S^{n-1}\times G_{n/2};\; u\in H\right\},$$ 
	and 
	$$I_2=\left\{(u,H)\in S^{n-1}\times G_{n/2};\; u\in H^{\perp}\right\}.$$ 
	Let $\mu_i$ be the Haar invariant probability measure on $I_i$ for $i=1,2$, with respect to the obvious $O_n$ action. Such measure exists due to the transitive property of such action. For a rectangular set $A_i\times B_i\subseteq I_i$ we have
	$$\mu_1(A_1\times B_1)=\int_{H\in B_1} \sigma_H(A_1\cap H),$$
	and
	$$\mu_2(A_2\times B_2)=\int_{H\in B_2} \sigma_{H^{\perp}}(A_2\cap H^{\perp}).$$
	By replacing $\alpha_2$ in Theorem \ref{thm:kcoordinates} with $\min\{\alpha_2,1\}$ we may assume that it is at most $1$.

	\begin{proposition}\label{prop:big_sets}
		Let $Q=A\times B\subseteq S^{n-1}\times G_{n/2}$ be such that $\mu_0(A\times B)\geq Ce^{-\alpha_2\sqrt{n}}$. Assume that the set $A$ depends on $\alpha_1\sqrt{n}$ directions. Then
		$$\sqrt{\mu_1(Q)\mu_2(Q)}\geq 0.8\mu_0(Q).$$
		The constants $\alpha_1,\alpha_2,C>0$ are universal constants.
	\end{proposition}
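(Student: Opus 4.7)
The plan is to combine the set-version consequence of Theorem~\ref{thm:kcoordinates} (already derived in the text right before the proposition) with the Cauchy--Schwarz inequality applied in the variable $H \in B$. First I would observe that the hypothesis $\mu_0(Q) = \sigma_{n-1}(A)\sigma_G(B) \geq C e^{-\alpha_2\sqrt{n}}$ together with $\sigma_G(B) \leq 1$ forces $\sigma_{n-1}(A) \geq C e^{-\alpha_2\sqrt{n}}$, which is exactly the regime needed to apply the corollary of Theorem~\ref{thm:kcoordinates} (up to adjusting $C$ if necessary). Define the ``good'' set of subspaces
$$B' = \left\{H \in G_{n/2} \,:\, \sqrt{\sigma_H(A\cap H)\sigma_{H^\perp}(A\cap H^\perp)} \geq 0.9\,\sigma_{n-1}(A)\right\},$$
so that $\sigma_G(G_{n/2} \setminus B') \leq \alpha_3 e^{-\sqrt{n}}$.

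Next, I would invoke Cauchy--Schwarz with $f(H) = \sqrt{\sigma_H(A\cap H)}\,\mathbf{1}_B(H)$ and $g(H) = \sqrt{\sigma_{H^\perp}(A\cap H^\perp)}\,\mathbf{1}_B(H)$ to obtain
$$\sqrt{\mu_1(Q)\mu_2(Q)} \;\geq\; \int_B \sqrt{\sigma_H(A\cap H)\,\sigma_{H^\perp}(A\cap H^\perp)}\,d\sigma_G(H).$$
This step is the crux of the ``both $H$ and $H^\perp$ at once'' philosophy: the geometric mean is exactly the quantity controlled by Theorem~\ref{thm:kcoordinates}, whereas $\mu_1(Q)$ and $\mu_2(Q)$ individually are not.

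Then I would restrict the integral to $B \cap B'$ and apply the pointwise concentration bound, yielding
$$\sqrt{\mu_1(Q)\mu_2(Q)} \;\geq\; 0.9\,\sigma_{n-1}(A)\,\sigma_G(B\cap B') \;\geq\; 0.9\,\mu_0(Q) - 0.9\,\sigma_{n-1}(A)\,\alpha_3 e^{-\sqrt{n}}.$$
To convert $0.9\,\mu_0(Q)$ into the claimed $0.8\,\mu_0(Q)$, I need $0.9\,\sigma_{n-1}(A)\,\alpha_3 e^{-\sqrt{n}} \leq 0.1\,\mu_0(Q)$. Using the trivial bound $\sigma_{n-1}(A) \leq 1$ and the fact (noted just before the proposition) that we may assume $\alpha_2 \leq 1$, the lower bound $\mu_0(Q) \geq C e^{-\alpha_2\sqrt{n}} \geq C e^{-\sqrt{n}}$ makes this estimate hold by choosing the universal constant $C$ sufficiently large (e.g.\ $C \geq 9\alpha_3$).

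There is no serious obstacle; the only delicate accounting is the final constant-tracking step, where one has to verify that a single choice of $C$ simultaneously guarantees applicability of Theorem~\ref{thm:kcoordinates} (requiring $\sigma_{n-1}(A) \geq e^{-\alpha_2\sqrt{n}}$) and absorbs the exponentially small error $\alpha_3 e^{-\sqrt{n}}$ coming from the bad subspaces. The reduction $\alpha_2 \leq 1$ made in the text is what makes this bookkeeping go through without additional hypotheses.
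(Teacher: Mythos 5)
Your proof is correct and follows essentially the same route as the paper's: reduce to a bound on $\sigma_{n-1}(A)$, apply the set-form consequence of Theorem~\ref{thm:kcoordinates} to control the measure of the ``bad'' subspaces, apply Cauchy--Schwarz in $H$ to pass from $\sqrt{\mu_1(Q)\mu_2(Q)}$ to the integral of the geometric mean, restrict to the good set, and absorb the exponentially small error by taking $C$ large (the paper packages this last step as inequality~(\ref{eq:constant}), which unwinds to the same threshold $C\geq 9\alpha_3$ you identify). The only cosmetic difference is that you parametrize the good set as $B'\subseteq G_{n/2}$ and intersect with $B$, while the paper directly defines the bad set $E\subseteq B$; these are the same decomposition.
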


	\begin{proof}
		Define 
		\begin{align*}
			E=\left\{ H\in B;\;\sqrt{\sigma_H(A\cap H)\sigma_{H^{\perp}}(A\cap {H^{\perp}})}\leq 0.9\sigma(A)\right\}.
		\end{align*}
	
		According to our assumption $\sigma_{n-1}(A)\geq \mu_0(A\times B)\geq Ce^{-\alpha_2\sqrt{n}}$.
		By the Theorem \ref{thm:kcoordinates}, $\bP(E)\leq \alpha_3e^{-\sqrt{n}}$. 
		We choose $C\geq 1$ big enough, such that
		\begin{equation}\label{eq:constant}
		0.9\sigma_G(B\setminus E)\geq 0.8\sigma_G(B).
		\end{equation}
		
		By the Cauchy-Schwarz inequality
		\small
		\begin{align*}
		\sqrt{\mu_1(Q)\mu_2(Q)}
		&=\sqrt{\left(\int_{H\in B}\sigma_{H\in B}(A\cap H) \right)\left(\int_{H\in B}\sigma_{H^{\perp}}(A\cap {H^{\perp}}) \right)}
		\geq \int_{H\in B}\sqrt{\sigma_H(A\cap H)\sigma_{H^{\perp}}(A\cap {H^{\perp}})}\\
		&\geq \int_{H\in B\setminus E}\sqrt{\sigma_H(A\cap H)\sigma_{H^{\perp}}(A\cap {H^{\perp}})}
		\geq 0.9 \int_{H\in B\setminus E}\sigma(A)=0.9\sigma(A)\sigma_G(B\setminus E).
		\end{align*}
		\normalsize
		Equation (\ref{eq:constant}) gives us
		$$\sqrt{\mu_1(Q)\mu_2(Q)}\geq 0.8\sigma_G(B)\sigma(A)=0.8\mu_0(Q).$$
	\end{proof}
	
	\begin{corollary} \label{cor:rectangles}
		Let $Q=A\times B\subseteq S^{n-1}\times G_{n/2}$. Assume that the set $A$ depends on $\alpha_1\sqrt{n}$ directions. Then
		$$\sqrt{\mu_1(Q)\mu_2(Q)}\geq 0.8\mu_0(Q)-Ce^{-\alpha_2\sqrt{n}}.$$
	\end{corollary}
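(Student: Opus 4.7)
The plan is to reduce this corollary to Proposition \ref{prop:big_sets} by a simple case analysis on the size of $\mu_0(Q)$. The proposition already gives the stronger inequality $\sqrt{\mu_1(Q)\mu_2(Q)} \geq 0.8\,\mu_0(Q)$ whenever $\mu_0(Q) \geq Ce^{-\alpha_2\sqrt{n}}$, so the role of the additive term $-Ce^{-\alpha_2\sqrt{n}}$ in the corollary is precisely to absorb the small-measure case that was excluded from the proposition's hypothesis.

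Concretely, I would first fix the same constant $C$ as in Proposition \ref{prop:big_sets} and split into two cases. In the first case, where $\mu_0(Q) \geq Ce^{-\alpha_2\sqrt{n}}$, the proposition applies directly (the assumption that $A$ depends on at most $\alpha_1\sqrt{n}$ directions is inherited from the hypothesis of the corollary), and we immediately get
$$\sqrt{\mu_1(Q)\mu_2(Q)} \;\geq\; 0.8\,\mu_0(Q) \;\geq\; 0.8\,\mu_0(Q) - Ce^{-\alpha_2\sqrt{n}}.$$

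In the second case, where $\mu_0(Q) < Ce^{-\alpha_2\sqrt{n}}$, the right-hand side of the target inequality satisfies
$$0.8\,\mu_0(Q) - Ce^{-\alpha_2\sqrt{n}} \;<\; 0.8\,C e^{-\alpha_2\sqrt{n}} - Ce^{-\alpha_2\sqrt{n}} \;=\; -0.2\,Ce^{-\alpha_2\sqrt{n}} \;<\; 0,$$
while the left-hand side $\sqrt{\mu_1(Q)\mu_2(Q)}$ is manifestly nonnegative. Hence the inequality holds trivially in this regime.

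There is no substantive obstacle here; the corollary is essentially a bookkeeping consequence of the proposition. The only thing to check is that the universal constant $C$ chosen in the proof of Proposition \ref{prop:big_sets} to satisfy equation (\ref{eq:constant}) is the same one used in the corollary's statement, so that the two cases mesh consistently.
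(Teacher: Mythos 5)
Your proof is correct and is precisely the intended (and essentially unique) argument: the paper states Corollary \ref{cor:rectangles} without proof, as the two-case reduction to Proposition \ref{prop:big_sets} that you spell out is immediate, with the additive $-Ce^{-\alpha_2\sqrt{n}}$ term serving exactly to render the small-measure case trivial.
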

	
	Using the above propositions, we are ready to prove Corollary \ref{cor:vsp}.
	
	\begin{proof}
		By repeated application of the protocol we may assume that the probability of error is less then $\frac{1}{9}$.
		By Yao's principle \cite{yao77}, we may assume that our protocol is a randomly chosen deterministic protocol.
		Let $D$ be the number of bits exchange in the protocol.
		We have a partition of $S^{n-1}\times G_{n/2}$ into $2^D$ rectangles of the form $Q=A\times B$, each labeled as ``In $H$'' or ``In $H^{\perp}$''. 
		Since we assume the total rank is at most $\alpha_1\sqrt{n}$, for every $Q=A\times B$ in the partition, the set $A$ is determined by at most $\alpha_1\sqrt{n}$ directions.
		Denote by $\cQ_+$ all the rectangles labeled ``In $H$'', and by $\cQ_-$ all the rectangles labeled ``In $H^{\perp}$''.
		According to our assumption $\sum_{Q\in \cQ_+} \mu_2(Q)\leq \frac{1}{9}$ and $\sum_{Q\in \cQ_-} \mu_1(Q)\leq \frac{1}{9}$.
		By Corollary \ref{cor:rectangles} and the Cauchy-Schwarz inequality, we have
		\begin{align*}
			\sum_{Q\in \cQ_+} (0.8\mu_0(Q)-Ce^{-\alpha_2\sqrt{n}})
			&\leq \sum_{Q\in \cQ_+}\sqrt{\mu_1(Q)\mu_2(Q)}
			\leq \sqrt{\left(\sum_{Q\in \cQ_+}\mu_1(Q)\right)\left(\sum_{Q\in \cQ_+}\mu_2(Q)\right)}\\
			&\leq \sqrt{1\cdot\frac{1}{9}}
			=\frac{1}{3}.
		\end{align*}
		Similarly we have,
		$$\sum_{Q\in \cQ_-} (0.8\mu_0(Q)-Ce^{-\alpha_2\sqrt{n}})\leq \frac{1}{3}.$$
		Summing the above inequalities, we obtain
		$$0.8-2^DCe^{-\alpha_2\sqrt{n}}\leq \frac{2}{3}\Rightarrow D\geq C'\sqrt{n}.$$
	\end{proof}
	
	If Conjecture \ref{conj:main} is true, then Proposition \ref{prop:big_sets} and Corollary \ref{cor:rectangles} are true without the assumption that the set $A$ depends on $\alpha_1\sqrt{n}$ directions. Hence, we may repeat the proof of Corollary \ref{cor:vsp} for a general classical protocol, and deduce Theorem \ref{thm:vsp}.

	\section{Proofs}\label{sec:evidence}
	In this section we prove Theorem \ref{thm:kcoordinates}. This theorem is a special case of the conjecture for functions that depend only on $O(\sqrt{n})$ directions. We assume that $n$ is even and greater than some universal constant.
	In this section, when we say {\it uniform distribution}, we refer to the Haar probability distribution.\\ 
	
	Throughout this section we shall use the letters $c, \tilde{c}, C$ etc. to denote various universal constants,
	whose value may change from one line to the next. Additionally, $\alpha_1,\alpha_2, \alpha_3, \rho > 0$
	are universal constants whose value would be determined only at the end of the section. Specifically, in this section
	we will assume a few upper bounds for $\alpha_1$ in terms of explicit positive universal constants,
	an upper bound for $\alpha_2$ in terms of $\alpha_1$, and a lower bound for $\alpha_3$ in terms of $\alpha_2$. \\
	
	Let $\psi:B_k\times S^{m-k-1}\to S^{m-1}$ be defined by $\psi(x,y)=(x,\sqrt{1-|x|^2}y)$. The map $\psi$ enables us to separate the dependence on the first $k$ coordinates. The following change of variables formula is standard:
	\begin{proposition}\label{prop:coarea}
		For any integrable $f:S^{m-1}\to\bR$ and any $1\leq k \leq m-1$ there exists $C_{m,k}=(m-k)\textrm{Vol}(B_{m-k})/(m\textrm{Vol}(B_m))$ such that
		\begin{align*}
			\int_{S^{m-1}}f(x)d\sigma_{m-1}(x)
			=C_{m,k} \int_{B_k}\left(1-|x|^2\right)^{(m-k-2)/2}
			\int_{S^{m-k-1}}f\left(x,\sqrt{1-|x|^2}\theta\right)d\sigma_{m-k-1}(\theta)dx.
		\end{align*}
	\end{proposition}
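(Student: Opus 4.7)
The plan is to disintegrate $\sigma_{m-1}$ along the projection $\pi:S^{m-1}\to\bR^k$ onto the first $k$ coordinates. This reduces the task to two sub-problems: identifying the conditional measure on each fiber of $\pi$, and computing the pushforward density on $B_k$.

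For the conditional measure, I would exploit the fact that $\sigma_{m-1}$ is invariant under the $O(m-k)$-action rotating the last $m-k$ coordinates. This action fixes each fiber $\pi^{-1}(x)=\{x\}\times\sqrt{1-|x|^2}\,S^{m-k-1}$ and acts transitively on it, so the conditional measure on the fiber must be the unique rotation-invariant probability on it, which is precisely the pushforward of $\sigma_{m-k-1}$ under $\theta\mapsto\psi(x,\theta)$. Hence there is a probability density $\rho$ on $B_k$ with
$$\int_{S^{m-1}}f\,d\sigma_{m-1}=\int_{B_k}\rho(x)\int_{S^{m-k-1}}f(\psi(x,\theta))\,d\sigma_{m-k-1}(\theta)\,dx,$$
and only the identification of $\rho$ remains.

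To compute $\rho$ I would use the Gaussian representation of $\sigma_{m-1}$: if $G=(G_1,\dots,G_m)$ has i.i.d.\ standard Gaussian entries then $G/|G|$ is uniform on $S^{m-1}$, so the first $k$ coordinates of this uniform point have the law of $G'/\sqrt{|G'|^2+|G''|^2}$ where $G'\in\bR^k$ and $|G''|^2\sim \chi^2_{m-k}$ are independent. A direct change of variables $x=G'/\sqrt{|G'|^2+|G''|^2}$ followed by integrating out the chi-squared variable produces a density on $B_k$ proportional to $(1-|x|^2)^{(m-k-2)/2}$. An equivalent geometric route is to compute the Jacobian of $\psi$ directly, where the scaling in the $\theta$-directions contributes $(1-|x|^2)^{(m-k-1)/2}$ and an orthogonality correction in the $x$-directions contributes a factor $(1-|x|^2)^{-1/2}$.

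Finally, I would fix the multiplicative constant by testing with $f\equiv 1$. Converting to polar coordinates on $B_k$ and substituting $t=|x|^2$ turns $\int_{B_k}(1-|x|^2)^{(m-k-2)/2}dx$ into a Beta integral equal to $\pi^{k/2}\Gamma((m-k)/2)/\Gamma(m/2)$, and inserting $\mathrm{Vol}(B_n)=\pi^{n/2}/\Gamma(n/2+1)$ shows that its reciprocal coincides with the stated $(m-k)\mathrm{Vol}(B_{m-k})/(m\,\mathrm{Vol}(B_m))$. The geometric content of the proposition is immediate from the rotational invariance in the last $m-k$ coordinates; the only mildly tedious step is the $\Gamma$-function bookkeeping that produces the precise value of $C_{m,k}$.
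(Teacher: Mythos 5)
The paper cites this proposition as a standard change-of-variables formula and gives no proof, so there is no in-paper argument to compare against. Your derivation is correct and complete: the $O(m-k)$-invariance identifies the conditional measure on each fiber $\{x\}\times\sqrt{1-|x|^2}\,S^{m-k-1}$ as the rescaled Haar measure; the Gaussian/chi-squared computation (equivalently, the Jacobian of $\psi$, which is the tangential scaling $(1-|x|^2)^{(m-k-1)/2}$ times the radial stretching $(1-|x|^2)^{-1/2}$) gives the marginal density proportional to $(1-|x|^2)^{(m-k-2)/2}$; and the normalization $\int_{B_k}(1-|x|^2)^{(m-k-2)/2}dx=\pi^{k/2}\Gamma((m-k)/2)/\Gamma(m/2)=m\,\textrm{Vol}(B_m)/\big((m-k)\textrm{Vol}(B_{m-k})\big)$ fixes $C_{m,k}$ exactly as stated.
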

	
	Let $E=\textrm{span}\{e_1,\ldots,e_k\}$. Let $H\subseteq\bR^n$ be a random subspace of dimension $n/2$ distributed uniformly.
	Let $\lambda_1,\ldots,\lambda_k$ be the singular values of the projection map $P:H\to E$. The singular values $\lambda_1,\ldots,\lambda_k$ are the cosines of the principal angles between $H$ and $E$. 
	The next proposition along with Proposition \ref{prop:coarea}, allows us to study the distribution of singular values of a random matrix instead of integration on a random subspace.
	
	\begin{proposition} \label{prop:distribution}
		Let $H$ be a random subspace of dimension $n/2$, let $E$ and $\lambda_1,\ldots,\lambda_k$ be as before. Let $\Lambda=\textrm{diag}(\lambda_1,\ldots,\lambda_k)$. Let $U:E\to E$ be a random orthogonal map distributed uniformly, independent of $H$. Let $f:S^{n-1}\to\bR$ be a measurable function such that $f$ depends only on the first $k$ coordinates. Then, the random variable $$\sqrt{\int_{S_H}f(x)d\sigma_H(x)\int_{S_{H^{\perp}}}f(x)d\sigma_{H^{\perp}}(x)}$$
		is equal in distribution to
		\begin{align} \label{formula:fixed_space}
		C_{n/2,k}\sqrt{\int_{B_k}f\left(U\Lambda U^T x\right)\left(1-|x|^2\right)^{(n/2-k-2)/2}dx}
		\sqrt{\int_{B_k}f\left(U\sqrt{I-\Lambda^2} U^Tx\right)\left(1-|x|^2\right)^{(n/2-k-2)/2}dx}.
		\end{align}
		The constant $C_{n/2,k}$ is the same as in Proposition \ref{prop:coarea} with $m=n/2$.
	\end{proposition}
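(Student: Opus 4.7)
\medskip

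\noindent\textbf{Proof proposal.} The plan is to condition on $H$ and compute both integrals directly using a common SVD, then invoke invariance to match the distribution of the right singular vectors with an independent uniform $U$.

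\medskip

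First, fix $H$ and take an SVD of the projection $P\colon H\to E$: choose orthonormal bases $v_1,\dots,v_{n/2}$ of $H$ and $u_1,\dots,u_k$ of $E$ with $Pv_i=\lambda_i u_i$ for $i\leq k$ and $Pv_i=0$ for $i>k$. Let $U_E\colon E\to E$ be the orthogonal map with matrix columns $u_1,\dots,u_k$ in the standard basis of $E$. I parametrize a point $x\in S_H$ by its coordinates $(a_1,\dots,a_{n/2})$ in the basis $v_1,\dots,v_{n/2}$. The projection of $x$ onto $E$ is then $\sum_{i=1}^{k}\lambda_i a_i u_i$, whose coordinates in the standard basis are $U_E\Lambda z$ with $z=(a_1,\dots,a_k)\in B_k$. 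Since $f$ depends only on the first $k$ coordinates of $x$, we have $f(x)=f(U_E\Lambda z)$, independent of the ``tail'' direction. Applying Proposition \ref{prop:coarea} with $m=n/2$ in the $v$-basis makes the inner spherical integral trivial and yields
\begin{equation*}
\int_{S_H} f\,d\sigma_H = C_{n/2,k}\int_{B_k} f(U_E\Lambda z)\,(1-|z|^2)^{(n/2-k-2)/2}\,dz.
\end{equation*}

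Next, I treat $H^{\perp}$ via the same $u_i$'s. A short SVD computation shows that, with $w_i\in E^{\perp}$ defined by $v_i=\lambda_i u_i+\sqrt{1-\lambda_i^2}\,w_i$, the vectors $v_i^{\perp}:=\sqrt{1-\lambda_i^2}\,u_i-\lambda_i w_i$ lie in $H^{\perp}$, are orthonormal, and satisfy $Pv_i^{\perp}=\sqrt{1-\lambda_i^2}\,u_i$. Hence the projection $P'\colon H^{\perp}\to E$ has singular values $\sqrt{1-\lambda_i^2}$ with the \emph{same} right singular vectors $u_1,\dots,u_k$. Completing $\{v_i^{\perp}\}_{i\leq k}$ by an orthonormal basis of $H^{\perp}\cap E^{\perp}$ and repeating the previous argument with $\Lambda$ replaced by $\sqrt{I-\Lambda^2}$ gives the analogous formula for $\int_{S_{H^{\perp}}}f\,d\sigma_{H^{\perp}}$, with the very same $U_E$. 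Multiplying the two identities and taking square roots yields formula (\ref{formula:fixed_space}) but with $U_E$ in place of $U$.

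\medskip

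Finally, I must show that $U_E$ is distributed like a uniform $U\in O(E)$ independent of $\Lambda$. This uses the invariance of the Haar measure on $G_{n/2}$ under any orthogonal transformation of $\bR^n$ that stabilizes both $E$ and $E^{\perp}$. For $W\in O(E)$, extended by the identity on $E^{\perp}$, the subspace $WH$ has the same distribution as $H$; the principal angles $\Lambda$ are unchanged, while the right singular vectors transform as $u_i\mapsto Wu_i$, i.e.\ $U_E\mapsto WU_E$. Thus for every deterministic $W$ the joint law of $(\Lambda,U_E)$ is preserved by $(\Lambda,U_E)\mapsto(\Lambda,WU_E)$; integrating $W$ against Haar measure on $O(E)$ shows that $U_E$ is uniform on $O(E)$ and independent of $\Lambda$. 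Replacing $U_E$ by an independent uniform $U$ and substituting $z=U^Tx$ in each $B_k$-integral (this is valid since $|z|=|x|$ and Lebesgue measure on $B_k$ is $O(k)$-invariant) produces the exact form of (\ref{formula:fixed_space}).

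\medskip

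I expect the only delicate point to be the identification of the SVD of $P'\colon H^{\perp}\to E$: one has to verify the orthonormality of the $v_i^{\perp}$ and check that the ambiguity when some $\lambda_i\in\{0,1\}$ or when there are repeated singular values occurs on a set of measure zero, so does not affect the distributional identity. All remaining steps (the coarea reduction and the invariance argument) are routine.
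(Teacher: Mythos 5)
Your proof is correct and takes essentially the same approach as the paper: both exploit the SVD of the projection $P\colon H\to E$, observe that $P'\colon H^\perp\to E$ has the same right singular vectors with singular values $\sqrt{1-\lambda_i^2}$, apply Proposition~\ref{prop:coarea} to collapse the integral to $B_k$, and invoke orthogonal invariance of the law of $H$ to see that the common rotation of $E$ is Haar-distributed and independent of $\Lambda$. The only cosmetic difference is that the paper packages the change of coordinates as an explicit global orthogonal map $V\colon\bR^n\to\bR^n$ fixing $E$, whereas you work directly in the SVD bases of $H$ and $H^\perp$; the mathematical content is identical.
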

	
	\begin{proof}
		In this proof, we construct an orthogonal map $V:\bR^n\to\bR^n$ that maps $H$ and $H^{\perp}$ to canonical subspaces that depend only on the principle angles and a rotation $U:E\to E$. The map $V$ is chosen such that $f$ would be invariant under $V$. In order to construct $V$ we use the projection maps to define appropriate orthonormal bases for $H$, $H^{\perp}$ and $E$.\\
		By the Singular Value Decomposition ({\it SVD}) of the projection $P:H\to E$ there exists an orthonormal basis $x_1,\ldots,x_k$ of $E$ and an orthonormal basis $y_1,\ldots,y_{n/2}$ of H such that 
		$$P=\sum_{i=1}^k\lambda_ix_i\otimes y_i.$$
		Since $Py_j=0$ for all $j=k+1,\ldots,n/2$ we have $y_{k+1},\ldots,y_{n/2}\in E^{\perp}$. Since $Py_i = \lambda_i x_i$ for $i=1,\ldots,k$ there exists a unit vector $v_{n/2+i}\in E^{\perp}$ such that
		$$y_i=\lambda_ix_i+\sqrt{1-\lambda_i^2}v_{n/2+i}, \quad \forall i=1,\ldots,k.$$
		Denote $v_j=y_j$ for $j=k+1,\ldots,n/2$. 
		Let $i'=n/2+i$ where $i\leq k$ and let $k+1\leq j \leq n/2$. Since $v_{k+1},\ldots,v_{n/2+k}\in E^{\perp}$, we have
		\begin{align*}
			0&=\dprod{y_{i}}{y_j}=\dprod{\lambda_{i}x_{i}+\sqrt{1-\lambda_{i}^2}v_{i'}}{v_j}\\
			&=\sqrt{1-\lambda_{i}^2}\dprod{v_{i'}}{v_j}.
		\end{align*}

		With probability $1$  we have $0<\lambda_i<1$, hence with probability $1$
		$$\dprod{v_{i'}}{v_j}=0.$$
		By the same argument we obtain $\dprod{v_{i}}{v_j}=0$ for all $i\ne j$.
		Let $P_{\perp}:H^{\perp}\to E$ be the orthogonal projection to $E$. The singular values of $P_{\perp}$ are exactly 
		$\sqrt{1-\lambda_1^2},\ldots,\sqrt{1-\lambda_k^2}$.
		Note that $$\sqrt{1-\lambda_1^2}x_1-\lambda_1v_{n/2+1},\ldots,\sqrt{1-\lambda_k^2}x_k-\lambda_kv_{n/2+k}\in H^{\perp}$$ 
		are orthogonal to each other.
		Since the SVD is unique, up to trivial transformations, we have $$P_{\perp}=\sum_{i=1}^k\sqrt{1-\lambda_i^2}x_i\otimes \left(\sqrt{1-\lambda_i^2}x_i-\lambda_iv_{n/2+i}\right).$$
		Hence, there exist $v_{n/2+k+1},\ldots,v_n\in E^{\perp}$ such that
		\begin{align*}
			&\sqrt{1-\lambda_1^2}x_1-\lambda_1v_{n/2+1},\ldots,\sqrt{1-\lambda_k^2}x_k-\lambda_kv_{n/2+k},
			v_{n/2+k+1},\ldots,v_n
		\end{align*}
		is an orthonormal basis of $H^{\perp}$.
		By the same argument as before, with probability one, $v_{n/2+1},\ldots,v_n$ are orthogonal to each other.
		Since $v_{n/2+k+1},\ldots,v_n\in H^{\perp}$ and $v_{k+1},\ldots,v_{n/2}\in H$ we find that
		$x_1,\ldots,x_k,$ $v_{k+1},\ldots,v_n$ is an orthonormal basis of $\bR^n$.
		Let $V$ be the orthogonal map defined by $Vx_i=x_i$ for $i=1,\ldots,k$ and $Vv_j=e_j$ for $j=k+1,\ldots,n$. Since $V$ is the identity map on $E$ we have $f(Vx)=f(x)$ for all $x\in S^{n-1}$. Hence,
		\begin{align*} 
		\sqrt{\int_{S_H}f(x)d\sigma_H(x)\int_{S_{H^{\perp}}}f(x)d\sigma_{H^{\perp}}(x)}
		&=\sqrt{\int_{S_H}f(Vx)d\sigma_H(x)\int_{S_{H^{\perp}}}f(Vx)d\sigma_{H^{\perp}}(x)}\\
		&=\sqrt{\int_{S_{VH}}f(x)d\sigma_{VH}(x)\int_{S_{VH^{\perp}}}f(x)d\sigma_{VH^{\perp}}(x)}
		\end{align*}
		Let $B_{H,k}$ be the unit ball of 
		\begin{align*}
			V\left(H\cap(E^{\perp}\cap H)^{\perp}\right)=
			\textrm{span}\{\lambda_1x_1+\sqrt{1-\lambda_1^2}e_{n/2+1},\ldots,\lambda_kx_k+\sqrt{1-\lambda_k^2}e_{n/2+k}\}.
		\end{align*}

		For any 
		$$x=\sum_{i=1}^{k}t_i\left(\lambda_ix_i+\sqrt{1-\lambda_i^2}e_{n/2+i}\right)\in B_{H,k},$$ 
		where $\sum_{i=1}^kt_i^2\leq 1$, we have
		\begin{align*}
			f(x)&=f\left(\sum_{i=1}^{k}t_i\left(\lambda_ix_i+\sqrt{1-\lambda_i^2}e_{n/2+i}\right)\right)
			=f\left(\sum_{i=1}^{k}t_i\lambda_ix_i\right).
		\end{align*}
		Let $U:E\to E$ be the orthogonal map defined by $Ux_i=e_i$ for $i=1,\ldots,k$. We have,
		$$\int_{B_{H,k}}f(x)dx=\int_{B_k}f(U\Lambda U^Tx)dx,$$
		where $B_k$ is the unit ball of $E$. 
		Since the distribution of $H$ is invariant under the action of $O(k)\times O(n-k)$, the distribution of $U$ is uniform over the orthogonal maps of $E$.
		Let $B_{H^{\perp},k}$ be the unit ball of $\textrm{span}\{\sqrt{1-\lambda_1^2}x_1-\lambda_1e_{n/2+1},\ldots,\sqrt{1-\lambda_k^2}x_k-\lambda_ke_{n/2+k}\}$.
		Using the same map $U$, we have
		$$\int_{B_{H^{\perp},k}}f(x)dx=\int_{B_k}f(U\sqrt{I-\Lambda^2} U^Tx)dx.$$
		To finish the proof we use Proposition \ref{prop:coarea} on $S_{VH}$ and $S_{VH^{\perp}}$.
	\end{proof}
	
	With probability $1$, the matrices $\Lambda$ and $\sqrt{I-\Lambda^2}$ are invertible. Hence, using the change of variables formula, (\ref{formula:fixed_space}) can be written as
	
	\begin{align*}
		\frac{C_{n/2,k}}{\sqrt{\prod_{j=1}^k\lambda_j\sqrt{1-\lambda_j^2}}}
		&\sqrt{\int_{\bR^k}f\left(x\right)\left(1-\left|\Lambda^{-1}U^T x\right|^2\right)_+^{(n/2-k-2)/2}dx}\\
		&\times\sqrt{\int_{\bR^k}f\left(x\right)\left(1-\left|\left(I-\Lambda^2\right)^{-1/2}U^Tx\right|^2\right)_+^{(n/2-k-2)/2}dx}.
	\end{align*}

	By the Cauchy-Schwarz inequality this is at least

	\begin{align} \label{equation:cs}
	\frac{C_{n/2,k}}{\sqrt{\prod_{j=1}^k\lambda_j\sqrt{1-\lambda_j^2}}}
	\int_{\bR^k}f\left(x\right)\left(1-\left|\Lambda^{-1}U^T x\right|^2\right)_+^{(n/2-k-2)/4}
	\left(1-\left|\left(I-\Lambda^2\right)^{-1/2}U^Tx\right|^2\right)_+^{(n/2-k-2)/4}dx.
	\end{align}

	The random variables $\lambda_1,\ldots,\lambda_k$ are the singular values of a block of size $n/2\times k$ in a random orthogonal matrix. These singular values can be described using Wishart matrices \cite{edleman+sutton07}
	
	\begin{proposition}\label{prop:singular}
		Let $N_1, N_2$ be $(n/2)\times k$ independent random matrices with independent standard Gaussian entries.
		Let $X$ be a random orthogonal matrix, chosen by the Haar uniform distribution. Let
		\small
		$$X=\begin{pmatrix}
		X_{1,1} & X_{1,2}\\
		X_{2,1} & X_{2,2}
		\end{pmatrix}$$
		\normalsize
		Where $X_{1,1}$ is $(n/2)\times k$ block. Then, the singular values of $X_{1,1}$ have the same distribution as the square roots of the eigenvalues of $N_1^TN_1(N_1^TN_1+N_2^TN_2)^{-1}$.
	\end{proposition}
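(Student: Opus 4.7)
The plan is to realize the $(n/2)\times k$ block $X_{1,1}$ as part of the $Q$-factor in a Gaussian QR decomposition, and then read off its Gram matrix.

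First I would observe that the first $k$ columns $M=\begin{pmatrix} X_{1,1}\\ X_{2,1}\end{pmatrix}$ of $X$ are Haar-distributed on the Stiefel manifold $V_k(\bR^n)$ of $n\times k$ matrices with orthonormal columns. This follows from the invariance of Haar measure on $O(n)$ under right multiplication by the block-diagonal subgroup $O(k)\times O(n-k)$, which acts transitively on the collection of first-$k$-columns.

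Next I would invoke the standard QR construction of the Stiefel-uniform measure. Stack the Gaussians into $N=\begin{pmatrix} N_1\\ N_2\end{pmatrix}$. Its law is invariant under left multiplication by $O(n)$, and $N$ has full column rank almost surely, so the thin QR decomposition $N=\tilde M R$ is uniquely defined once one demands $R$ upper triangular with positive diagonal. By rotation-invariance and uniqueness of this factorization, $\tilde M$ is Haar-distributed on $V_k(\bR^n)$, so $\tilde M$ and $M$ have the same law. In particular, the top $(n/2)\times k$ block $N_1 R^{-1}$ of $\tilde M$ agrees in distribution with $X_{1,1}$.

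The computation is then direct. We obtain $X_{1,1}^T X_{1,1}\stackrel{d}{=} R^{-T} N_1^T N_1 R^{-1}$. From $\tilde M^T\tilde M=I_k$ we also read off $R^T R=N^T N=N_1^T N_1+N_2^T N_2$, whence $R^{-1}R^{-T}=(N_1^T N_1+N_2^T N_2)^{-1}$. Since the nonzero eigenvalues of $AB$ equal those of $BA$, the eigenvalues of $R^{-T}N_1^T N_1 R^{-1}$ coincide with those of $N_1^T N_1(N_1^T N_1+N_2^T N_2)^{-1}$, and taking square roots gives the singular values of $X_{1,1}$, as claimed. The only points needing care are the almost-sure invertibility of $N_1^T N_1+N_2^T N_2$ (immediate because $n\geq 2k$ and the entries are Gaussian) and the sign convention for $R$ that makes the Gaussian QR-factor $\tilde M$ genuinely Haar on the Stiefel manifold---both are standard facts, so I do not anticipate any real obstacle beyond bookkeeping.
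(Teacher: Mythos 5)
Your argument is correct and is the standard QR/Jacobi ensemble derivation; the paper itself does not prove this proposition but cites Edelman and Sutton, whose treatment is essentially what you reproduce. The chain $X_{1,1}\stackrel{d}{=}N_1R^{-1}$, $R^TR=N_1^TN_1+N_2^TN_2$, followed by the similarity $R^{-T}N_1^TN_1R^{-1}\sim N_1^TN_1(N_1^TN_1+N_2^TN_2)^{-1}$, is exactly right, and your remarks on almost-sure invertibility and the sign convention on $R$ are appropriate.

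One small correction to the justification in your first step. The reason the first $k$ columns of a Haar-distributed $X\in O(n)$ are uniform on the Stiefel manifold $V_k(\bR^n)$ is not right multiplication by $O(k)\times O(n-k)$, which does not act transitively on $V_k(\bR^n)$ (it only rotates within the fixed column span). The correct reasoning is via the \emph{left} action: the map $X\mapsto X_{:,1:k}$ is $O(n)$-equivariant for left multiplication, and the left $O(n)$-action on $V_k(\bR^n)$ is transitive, so the pushforward of left-invariant Haar measure on $O(n)$ is the unique $O(n)$-invariant probability measure on $V_k(\bR^n)$. Equivalently, right-invariance under the stabilizer $O(n-k)$ of the first $k$ coordinates lets Haar measure descend along the quotient $O(n)\to O(n)/O(n-k)\cong V_k(\bR^n)$. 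This does not affect the validity of the rest of your argument, since the Stiefel-uniformity of $X_{:,1:k}$ is the standard fact you intended to use.
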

	
	Upper and lower bounds for the eigenvalues of the above matrix, can be achieved using a concentration result by Gordon for singular values of Gaussian matrices \cite{vershynin10}.
	
	\begin{lemma}\label{lemma:singular_concentration}
		Let $A$ be $(n/2)\times k$ random matrix with independent standard Gaussian entries. Assume that $k\leq n/2$. Let $s_1\leq \cdots\leq s_k$ be the singular values of $A$, then with probability greater than $1-2e^{-t^2/2}$ we have
		$$\sqrt{n/2}-\sqrt{k}-t\leq s_1 \leq s_k \leq \sqrt{n/2}+\sqrt{k}+t.$$
	\end{lemma}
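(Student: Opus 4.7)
The plan is to deduce the lemma from two standard ingredients: a bound on the expectations of the extreme singular values via Gordon's comparison theorem, and Gaussian Lipschitz concentration around those expectations. Since the paper already cites Vershynin's survey for this exact statement, I would simply assemble these two ingredients cleanly.

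First I would observe that the maps $A \mapsto s_k(A)$ and $A \mapsto s_1(A)$, viewed as functions on $\mathbb{R}^{(n/2)\times k}$ equipped with the Frobenius inner product, are $1$-Lipschitz. This follows from the variational formulas
\begin{align*}
s_k(A) &= \max_{x\in S^{k-1}}\max_{y\in S^{n/2-1}}\dprod{Ax}{y}, \\
s_1(A) &= \min_{x\in S^{k-1}}\max_{y\in S^{n/2-1}}\dprod{Ax}{y},
\end{align*}
together with the fact that $\dprod{Ax}{y} = \dprod{A}{yx^T}_{HS}$ and $\norm{yx^T}_{HS}=1$ when $x,y$ are unit vectors. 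Gaussian concentration on $\mathbb{R}^{(n/2)k}$ then yields, for every $t>0$,
$$\bP\bigl(|s_i(A) - \bE s_i(A)|\geq t\bigr) \leq 2e^{-t^2/2}, \qquad i\in\{1,k\}.$$

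Next I would apply Gordon's min-max comparison inequality to the Gaussian process $G_{x,y} = \dprod{Ax}{y}$ on $S^{k-1}\times S^{n/2-1}$ and the auxiliary process $H_{x,y} = \dprod{g}{x} + \dprod{h}{y}$ with independent standard Gaussian vectors $g\in\bR^k$, $h\in\bR^{n/2}$. Because $G$ and $H$ have matching variances and the required covariance comparison on pairs $(x,y),(x',y')$, Gordon's theorem yields
$$\bE s_k(A) \leq \bE\norm{g}_2 + \bE\norm{h}_2 \leq \sqrt{k}+\sqrt{n/2},$$
and, by the dual min-max form,
$$\bE s_1(A) \geq \bE\norm{h}_2 - \bE\norm{g}_2 \geq \sqrt{n/2}-\sqrt{k},$$
where I used $\bE\norm{g}_2 \leq \sqrt{k}$ and the standard lower bound $\bE\norm{h}_2\geq \sqrt{n/2}-o(1)$; the relevant tight forms are recorded in the Vershynin reference.

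Combining concentration with these expectation bounds via a union bound gives
$$\bP\bigl(s_1 \geq \sqrt{n/2}-\sqrt{k}-t \text{ and } s_k \leq \sqrt{n/2}+\sqrt{k}+t\bigr) \geq 1 - 2e^{-t^2/2}.$$
The only nontrivial step is Gordon's inequality, but it is entirely off-the-shelf here; the rest is Cauchy--Schwarz, the Lipschitz observation, and Borell's Gaussian concentration inequality. I expect no genuine obstacle, so the proposal is really just to cite Gordon's theorem plus Gaussian concentration and verify the Lipschitz constants.
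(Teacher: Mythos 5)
The paper gives no proof of this lemma at all --- it is stated and cited directly from Vershynin's survey, so there is nothing to compare your argument against. Your outline (Gordon's min-max comparison plus Gaussian Lipschitz concentration) is exactly the standard route that Vershynin takes, and the Lipschitz observation, the variational characterizations, and the union bound are all correct.

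However, there is one genuine gap in the way you close the expectation estimate. From Gordon's inequality you get $\bE s_1(A)\geq\bE\norm{h}_2-\bE\norm{g}_2$, and you then invoke $\bE\norm{g}_2\leq\sqrt{k}$ together with ``$\bE\norm{h}_2\geq\sqrt{n/2}-o(1)$.'' Those two facts do \emph{not} yield the clean bound $\bE s_1\geq\sqrt{n/2}-\sqrt{k}$; they only give $\sqrt{n/2}-\sqrt{k}-o(1)$, and that slack cannot be absorbed into the one-sided tail $e^{-t^2/2}$ without degrading the constant. The step that actually closes the gap is the inequality
$$\bE\norm{g_m}_2-\bE\norm{g_{m'}}_2\ \geq\ \sqrt{m}-\sqrt{m'}\qquad(m\geq m'),$$
equivalently that $m\mapsto\sqrt{m}-\bE\norm{g_m}_2$ is nonincreasing. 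This is a real fact (provable, e.g., from the identity $\bE\norm{g_{m+1}}_2\cdot\bE\norm{g_m}_2=m$), but it is exactly the point at which the naive Jensen/expectation bounds run in the wrong direction, and it is the only nontrivial content of the lemma beyond generic concentration. Since you explicitly defer to Vershynin for ``the relevant tight forms,'' this is a flagged rather than hidden gap, but if you intend to present this as a self-contained proof you should spell out this monotonicity step rather than the two displayed inequalities, which as written do not suffice.
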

	
	Combining both results, we have
	\begin{proposition} \label{prop:singular2}
		Let $k\leq \alpha_1\sqrt{n}$ and let $\lambda_1,\ldots,\lambda_k$ be as before. Then, with probability greater then $1-4e^{-\sqrt{n}}$, we have
		$$\left|\lambda_i-\frac{1}{\sqrt{2}}\right|\leq \frac{C(\sqrt{\alpha_1}+\sqrt{2})}{n^{1/4}}, \; \forall i=1,\ldots,k.$$
	\end{proposition}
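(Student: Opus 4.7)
The plan is to combine Proposition \ref{prop:singular} with Lemma \ref{lemma:singular_concentration} via a generalized Rayleigh quotient argument, then pass from bounds on $\lambda_i^2$ to bounds on $\lambda_i$.

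First, I would invoke Proposition \ref{prop:singular}, so that it suffices to analyze the eigenvalues $\mu_i$ of $N_1^T N_1 (N_1^T N_1 + N_2^T N_2)^{-1}$ for two independent $(n/2)\times k$ standard Gaussian matrices $N_1, N_2$. These $\mu_i$ are exactly the generalized eigenvalues solving $N_1^T N_1 v = \mu_i (N_1^T N_1 + N_2^T N_2) v$, so each $\mu_i$ is of the form
\[
\mu_i = \frac{\|N_1 v_i\|^2}{\|N_1 v_i\|^2 + \|N_2 v_i\|^2}
\]
for some nonzero $v_i\in\bR^k$. This identity is the key structural observation that reduces the proposition to bounds on the singular values of $N_1$ and $N_2$ separately.

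Next, I would apply Lemma \ref{lemma:singular_concentration} to $N_1$ and to $N_2$ with the choice $t = \sqrt{2}\, n^{1/4}$, so that $2e^{-t^2/2}=2e^{-\sqrt{n}}$. By a union bound, with probability at least $1-4e^{-\sqrt{n}}$ all singular values $s$ of both $N_1$ and $N_2$ satisfy
\[
s_{\min} \leq s \leq s_{\max}, \qquad s_{\max/\min} := \sqrt{n/2} \pm (\sqrt{\alpha_1}+\sqrt{2})\,n^{1/4},
\]
using $\sqrt{k}\leq \sqrt{\alpha_1}\,n^{1/4}$. On this event, $\|N_j v\|^2 \in [s_{\min}^2\|v\|^2, s_{\max}^2\|v\|^2]$ for every $v$ and $j=1,2$, so applying this bound to numerator and denominator of the Rayleigh quotient formula above yields
\[
\frac{s_{\min}^2}{s_{\min}^2+s_{\max}^2} \;\leq\; \mu_i \;\leq\; \frac{s_{\max}^2}{s_{\min}^2+s_{\max}^2}.
\]
A direct computation gives $s_{\max}^2 - s_{\min}^2 = 2\sqrt{2}(\sqrt{\alpha_1}+\sqrt{2})\,n^{3/4}$ and $s_{\min}^2 + s_{\max}^2 = n + 2(\sqrt{\alpha_1}+\sqrt{2})^2\sqrt{n}$, so for $n$ large enough,
\[
\left|\mu_i - \tfrac12\right| \;\leq\; \frac{s_{\max}^2 - s_{\min}^2}{2(s_{\min}^2+s_{\max}^2)} \;\leq\; \frac{C(\sqrt{\alpha_1}+\sqrt{2})}{n^{1/4}}.
\]

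Finally, since $\lambda_i = \sqrt{\mu_i}$ and $\mu_i$ is bounded away from $0$ on the good event, the elementary inequality $|\sqrt{a}-\sqrt{b}| \leq |a-b|/(\sqrt{a}+\sqrt{b})$ with $a=\mu_i$, $b=1/2$ converts the bound on $\mu_i-1/2$ into a bound of the same order on $\lambda_i - 1/\sqrt{2}$, possibly after absorbing a constant factor into $C$. I do not expect any serious obstacle here; the only point requiring a touch of care is the generalized eigenvalue step, where one must avoid treating $N_1^TN_1$ and $N_2^TN_2$ as commuting, and instead use the simultaneous Rayleigh-quotient identity on an eigenvector of the generalized problem.
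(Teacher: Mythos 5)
Your proof is correct, and it takes a genuinely different route from the paper at the key perturbation step. Both proofs start the same way: invoke Proposition~\ref{prop:singular} to reduce to eigenvalues of $N_1^TN_1(N_1^TN_1+N_2^TN_2)^{-1}$, then apply Lemma~\ref{lemma:singular_concentration} with $t\approx\sqrt{2}n^{1/4}$ and a union bound over $N_1,N_2$ to get the failure probability $4e^{-\sqrt{n}}$. Where you diverge is in the passage from singular-value bounds on $N_1,N_2$ to eigenvalue bounds on the ratio matrix. The paper writes $N_j^TN_j=(n/2)I+E_j$ and expands $(I+T_1)^{-1}$ as a Neumann series, extracting $\tfrac12 I + T$ with $\norm{T}_{op}\lesssim(\sqrt{\alpha_1}+\sqrt{2})n^{-1/4}$; this is a multiplicative operator-perturbation argument and requires the remark you flag about not treating $N_1^TN_1$ and $N_2^TN_2$ as commuting, which the Neumann expansion handles automatically. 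You instead exploit the variational structure of the generalized eigenvalue problem $N_1^TN_1 v=\mu(N_1^TN_1+N_2^TN_2)v$, writing each eigenvalue as a Rayleigh-type quotient $\|N_1v\|^2/(\|N_1v\|^2+\|N_2v\|^2)$ and bounding numerator and denominator separately by the extreme singular values. Your route is arguably a bit cleaner since it avoids convergence bookkeeping for the series and the elementary square-root inequality at the end is immediate; the paper's route has the advantage of producing an explicit operator decomposition $\frac12 I + T$ that it also reuses in Corollary~\ref{cor:sum}. One small thing worth stating explicitly in your version: on the good event $s_{\min}>0$ (true once $n$ exceeds a universal constant, which the paper assumes throughout), so the quotient bounds are well-defined; otherwise everything checks out.
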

	
	\begin{proof}
		Let $N_1, N_2$ be as in Proposition \ref{prop:singular}. By Lemma \ref{lemma:singular_concentration} there exists $\mu_1,\ldots,\mu_k$, $\sigma_1,\ldots,\sigma_k$ and $U,V$ orthogonal matrices such that
		\begin{align*}
			N_1^TN_1 &= U\textrm{diag}(\mu_1^2,\dots,\mu_k^2)U^T,\;N_2^TN_2
			= V\textrm{diag}(\sigma_1^2,\dots,\sigma_k^2)V^T,
		\end{align*}
		and, there exists $C'>0$ such that, with probability greater then $1-4e^{-\sqrt{n}}$, 
		\begin{equation} \label{eq:singular}
		\left|\mu_i^2-\frac{n}{2}\right|,\left|\sigma_i^2-\frac{n}{2}\right|\leq C'(\sqrt{\alpha_1}+\sqrt{2})n^{3/4},
		\end{equation}
		for all $i=1,\ldots,k$.
		Assume that event (\ref{eq:singular}) holds true.
		Let $E_1, E_2$ be defined by
		$N_1^TN_1 = (n/2)I + E_1$ and $N_2^TN_2 = (n/2)I + E_2$. 
		Then $\norm{E_i}_{op}\leq C_i(\sqrt{\alpha_1}+\sqrt{2}) n^{3/4}$ for $i=1,2$. We have,
		\begin{align*}
			&N_1^TN_1(N_1^TN_1+N_2^TN_2)^{-1}
			=\frac{1}{2}\left(I+\frac{2}{n}E_1\right)\left(I+\frac{1}{n}(E_1+E_2)\right)^{-1}.
		\end{align*}
		Let $T_1=(E_1+E_2)/n$ and $T_2=E_1/n$, then $\norm{T_i}_{op}\leq C_i'(\sqrt{\alpha_1}+\sqrt{2})n^{-1/4}$ for $i=1,2$. We have
		\begin{align*}
			&N_1^TN_1(N_1^TN_1+N_2^TN_2)^{-1}
			=\left(\frac{1}{2}I+T_2\right)\left(I-T_1+\sum_{j=2}^{\infty}(-1)^jT_1^j\right).
		\end{align*}
		Hence,
		$$N_1^TN_1(N_1^TN_1+N_2^TN_2)^{-1}=\frac{1}{2}I+T,$$
		where $\norm {T}_{op}\leq \tilde{C}(\sqrt{\alpha_1}+\sqrt{2})n^{-1/4}$.
	\end{proof}
	
	\begin{corollary} \label{cor:sum}
		With probability greater then $1-4e^{-\sqrt{n}}$ we have
		\begin{align*}
			\left|\left|U\Lambda^{-1}U^Tx\right|^2+\left|U(I-\Lambda^{2})^{-1/2}U^Tx\right|^2-4|x|^2\right|
			\leq C(\sqrt{\alpha_1}+\sqrt{2})^2|x|^2/\sqrt{n}, \quad \forall x\in\bR^k.
		\end{align*}
	\end{corollary}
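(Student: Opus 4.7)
The plan is to reduce the corollary to a scalar identity about the singular values $\lambda_i$ by diagonalizing, and then to exploit the exact algebraic cancellation of the first-order terms in the expansions of $\lambda_i^{-2}$ and $(1-\lambda_i^2)^{-1}$ around $\lambda_i^2=1/2$.

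First I would set $y=U^T x$. Since $U$ is orthogonal we have $|x|=|y|$, and
\[
|U\Lambda^{-1}U^T x|^2=|\Lambda^{-1}y|^2=\sum_{i=1}^k\lambda_i^{-2}y_i^2,\qquad |U(I-\Lambda^2)^{-1/2}U^T x|^2=\sum_{i=1}^k(1-\lambda_i^2)^{-1}y_i^2,
\]
so the quantity to be bounded equals
\[
\sum_{i=1}^k\bigl(\lambda_i^{-2}+(1-\lambda_i^2)^{-1}-4\bigr)y_i^2.
\]
Hence it suffices to bound the scalar error $\bigl|\lambda_i^{-2}+(1-\lambda_i^2)^{-1}-4\bigr|$ uniformly in $i$, and then use $\sum_i y_i^2=|x|^2$.

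Next, on the event from Proposition \ref{prop:singular2} of probability at least $1-4e^{-\sqrt{n}}$, the proof of that proposition actually produces the stronger statement that the matrix whose eigenvalues are the $\lambda_i^2$ equals $\tfrac12 I+T$ with $\|T\|_{op}\le \tilde C(\sqrt{\alpha_1}+\sqrt{2})n^{-1/4}$. In particular $\lambda_i^2=\tfrac12+t_i$ with $|t_i|\le\tilde C(\sqrt{\alpha_1}+\sqrt{2})n^{-1/4}$, which is less than, say, $1/4$ for $n$ large enough. I would then write
\[
\lambda_i^{-2}=\frac{2}{1+2t_i}=2-4t_i+\frac{8t_i^2}{1+2t_i},\qquad (1-\lambda_i^2)^{-1}=\frac{2}{1-2t_i}=2+4t_i+\frac{8t_i^2}{1-2t_i}.
\]
The linear terms cancel when we add the two expressions, leaving $\lambda_i^{-2}+(1-\lambda_i^2)^{-1}-4=\frac{8t_i^2}{1+2t_i}+\frac{8t_i^2}{1-2t_i}$, whose absolute value is bounded by $C\,t_i^2\le C(\sqrt{\alpha_1}+\sqrt{2})^2/\sqrt{n}$.

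Putting it all together, on the high-probability event,
\[
\left|\sum_{i=1}^k\bigl(\lambda_i^{-2}+(1-\lambda_i^2)^{-1}-4\bigr)y_i^2\right|\le C(\sqrt{\alpha_1}+\sqrt{2})^2n^{-1/2}\sum_{i=1}^k y_i^2=C(\sqrt{\alpha_1}+\sqrt{2})^2|x|^2/\sqrt{n},
\]
which is the desired bound. There is no real obstacle here beyond spotting the cancellation of the first-order terms; the slight subtlety is making sure to exploit not just the individual estimates $|\lambda_i-1/\sqrt2|\le C(\sqrt{\alpha_1}+\sqrt 2)n^{-1/4}$ from Proposition \ref{prop:singular2} but the fact that $\lambda_i^{-2}$ and $(1-\lambda_i^2)^{-1}$ deviate from $2$ in \emph{opposite} directions with equal linear coefficient, so that a naive bound of $n^{-1/4}$ improves to $n^{-1/2}$.
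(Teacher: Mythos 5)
Your proof is correct and takes essentially the same approach as the paper: both exploit the exact cancellation of the first-order term in $\lambda_i^{-2}+(1-\lambda_i^2)^{-1}$ around $\lambda_i^2=1/2$, upgrading the $n^{-1/4}$ bound on the singular values to an $n^{-1/2}$ bound on the combined quantity. The only difference is cosmetic: the paper carries out the cancellation at the matrix level, writing $\Lambda^{-2}+(I-\Lambda^2)^{-1}=4(I-4(\sqrt{2}T+T^2)^2)^{-1}=4I+\tilde T$, whereas you diagonalize first via $y=U^Tx$ and do the same cancellation scalar-by-scalar, which is equivalent since all the matrices involved are diagonal.
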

	
	\begin{proof}
		Assume that 
		$$\Lambda = \frac{1}{\sqrt{2}}I+T,$$
		where $\norm{T}_{op}\leq C'(\sqrt{\alpha_1}+\sqrt{2})/n^{1/4}$.
		By the above proposition, this event has probability greater than $1-4e^{-\sqrt{n}}$.
		We have,
		\begin{align*}
			\Lambda^{-2}+(I-\Lambda^2)^{-1}&=\left(\frac{1}{2}I+\sqrt{2}T+T^2\right)^{-1}
			+\left(\frac{1}{2}I-\sqrt{2}T-T^2\right)^{-1}\\
			&=4(I-4(\sqrt{2}T+T^2)^2)^{-1}=4I+\tilde{T},
		\end{align*}
		where $\norm{\tilde{T}}_{op}\leq \tilde{C}\norm{T^2}_{op}\leq C(\sqrt{\alpha_1}+\sqrt{2})^2/\sqrt{n}$. Hence,
		\begin{align*}
			\left|U\Lambda^{-1}U^Tx\right|^2+\left|U(I-\Lambda^{2})^{-1/2}U^Tx\right|^2
			&=\dprod{U(\Lambda^{-2}+(I-\Lambda^2)^{-1})U^Tx}{x}\\
			&=4|x|^2+\dprod{U\tilde{T}U^Tx}{x}.
		\end{align*}
		Hence,
		\begin{align*}
			\left|\left|U\Lambda^{-1}U^Tx\right|^2+\left|U(I-\Lambda^{2})^{-1/2}U^Tx\right|^2-4|x|^2\right|
			\leq \norm{\tilde{T}}_{op}|x|^2\leq C(\sqrt{\alpha_1}+\sqrt{2})^2|x|^2/\sqrt{n}.
		\end{align*}
		
	\end{proof}
	
	The above proof demonstrates how considering both $H$ and $H^{\perp}$ simultaneously can cancel the first order term in concentration inequalities. This cancellation leads to great improvement of the estimations, and it is one of the fundamental ideas of our approach.\\
	
	The concentration of the principal angles, allows us to evaluate the coefficient $C_{n/2,k}\left(\prod_{j=1}^{k}\lambda_j\sqrt{1-\lambda_j^2}\right)^{-1/2}$ and the integral at (\ref{equation:cs}).
	
	\begin{proposition} \label{prop:det_coefficients}
		Let $C_{n,k}$ and $C_{n/2,k}$ be the constants from Proposition \ref{prop:coarea} with $m=n,n/2$. For $k\leq \alpha_1\sqrt{n}$, we have
		$$2^{k/2}\frac{C_{n/2,k}}{C_{n,k}}\geq Ce^{-\alpha_1^2/4}.$$
	\end{proposition}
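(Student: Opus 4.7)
The plan is to reduce the ratio $C_{n/2,k}/C_{n,k}$ to a ratio of Gamma functions and then apply Stirling's approximation. Recall $\mathrm{Vol}(B_m) = \pi^{m/2}/\Gamma(m/2+1)$, so substituting this into the definition of $C_{m,k}$ and simplifying with the identity $\Gamma(y+1)=y\Gamma(y)$ gives the clean formula
\begin{equation*}
C_{m,k} \;=\; \frac{\Gamma(m/2)}{\pi^{k/2}\,\Gamma((m-k)/2)}.
\end{equation*}
Applying this for $m=n$ and $m=n/2$, the $\pi^{k/2}$ factors cancel and we obtain
\begin{equation*}
\frac{C_{n/2,k}}{C_{n,k}} \;=\; \frac{\Gamma(n/4)\,\Gamma(n/2-k/2)}{\Gamma(n/4-k/2)\,\Gamma(n/2)}.
\end{equation*}

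Next I would compare $\Gamma(x)/\Gamma(x-k/2)$ at the two scales $x = n/4$ and $x = n/2$ using Stirling's formula $\log\Gamma(x) = (x-1/2)\log x - x + \tfrac{1}{2}\log(2\pi) + O(1/x)$. A direct Taylor expansion of $\log(x-k/2) = \log x - (k/2)/x - (k/2)^2/(2x^2) - \cdots$ yields, to the relevant order,
\begin{equation*}
\log\frac{\Gamma(x)}{\Gamma(x-k/2)} \;=\; \frac{k}{2}\log x \;-\; \frac{(k/2)(k/2+1)}{2x} \;+\; O\!\left(\frac{k^3}{x^2}\right).
\end{equation*}
Subtracting these expansions at $x=n/4$ and $x=n/2$, the $(k/2)\log x$ terms combine into $(k/2)\log(1/2) = -(k/2)\log 2$, while the $1/x$ corrections combine into $-(k/2)(k/2+1)(1/(n/2) - 1/n) = -(k/2)(k/2+1)/n$. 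Consequently
\begin{equation*}
\log\!\left(2^{k/2}\frac{C_{n/2,k}}{C_{n,k}}\right) \;=\; -\frac{k^2}{4n} \;-\; \frac{k}{2n} \;+\; O\!\left(\frac{k^3}{n^2}\right).
\end{equation*}

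Finally, since $k \le \alpha_1 \sqrt{n}$, the leading error $k^2/(4n)$ is bounded by $\alpha_1^2/4$, while the lower-order terms $k/(2n)$ and $O(k^3/n^2)$ are bounded by universal constants for $n$ large (using $\alpha_1$ bounded). Exponentiating gives $2^{k/2}C_{n/2,k}/C_{n,k} \ge C e^{-\alpha_1^2/4}$ as claimed. The only mildly delicate point is keeping enough terms in Stirling to see that the leading correction is exactly the quadratic $-k^2/(4n)$ (which, being of size up to $\alpha_1^2/4$ precisely when $k \asymp \alpha_1\sqrt{n}$, is what determines the exponential constant); all other contributions are $o(1)$ and absorb into the universal constant $C$. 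No deeper input is required beyond the explicit Gamma-function formula and a careful Stirling expansion.
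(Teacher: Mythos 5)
Your proof is correct and follows essentially the same route as the paper: reduce the ratio $C_{n/2,k}/C_{n,k}$ to a ratio of Gamma functions and extract the leading $-k^2/(4n)$ correction via Stirling, noting the lower-order terms are $O(1/\sqrt n)$ under $k\le\alpha_1\sqrt n$. One small remark: your simplification $C_{m,k}=\Gamma(m/2)/\bigl(\pi^{k/2}\Gamma((m-k)/2)\bigr)$ is the correct reduction from $C_{m,k}=(m-k)\mathrm{Vol}(B_{m-k})/(m\,\mathrm{Vol}(B_m))$, whereas the paper's displayed Gamma ratio carries an extra $+1/2$ in each argument that does not follow from that formula --- but since this shift cancels in the Stirling expansion to the relevant order, the final bound is unaffected.
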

	
	\begin{proof}
		By the definition of $C_{n,k}$ and $C_{n/2,k}$, we need to estimate
		$$2^{k/2}\frac{\Gamma(n/4+1/2)\Gamma(n/2-k/2+1/2)}{\Gamma(n/4-k/2+1/2)\Gamma(n/2+1/2)}$$
		Using Sterling’s formula and the assumption on $k$, this is
		$$\left(1+O\left(\frac{1}{\sqrt{n}}\right)\right) \textrm{exp}\left(-\frac{k^2}{4n}+O\left(\frac{1}{\sqrt{n}}\right)\right).$$
	\end{proof}
	
	Hence, by choosing $\alpha_1$ small enough and using the concentration result for $\lambda_i\sqrt{1-\lambda_i^2}$ (as in Corollary \ref{cor:sum}) we have:
	
	\begin{corollary} \label{cor:coefficient}
		Let $k\leq \alpha_1\sqrt{n}$ and let $\lambda_1,\ldots,\lambda_k$ be as before. Then, with probability greater than $1-4e^{-\sqrt{n}}$, we have
		$$C_{n/2,k}\sqrt{\prod_{j=1}^{k}\frac{1}{\lambda_j\sqrt{1-\lambda_j^2}}}\geq 0.98 C_{n,k}.$$
	\end{corollary}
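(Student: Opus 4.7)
The proof hinges on two observations. First, the scalar function $g(\lambda) := \lambda\sqrt{1-\lambda^2}$ is maximized on $[0,1]$ at $\lambda = 1/\sqrt{2}$, where it equals $1/2$. Since $\lambda_1,\ldots,\lambda_k \in (0,1)$ almost surely, this immediately yields $\prod_{j=1}^k \lambda_j\sqrt{1-\lambda_j^2} \leq 2^{-k}$, and therefore
$$\sqrt{\prod_{j=1}^{k}\frac{1}{\lambda_j\sqrt{1-\lambda_j^2}}}\geq 2^{k/2}$$
with probability one. Second, Proposition \ref{prop:det_coefficients} already supplies the unconditional bound $2^{k/2} C_{n/2,k} \geq C\, e^{-\alpha_1^2/4} C_{n,k}$, where, by inspection of the Stirling expansion in its proof, the constant $C$ may be taken as $1 + o(1)$ as $n\to\infty$.

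Chaining these two inequalities gives, with probability one,
$$C_{n/2,k}\sqrt{\prod_{j=1}^{k}\frac{1}{\lambda_j\sqrt{1-\lambda_j^2}}}\geq C\, e^{-\alpha_1^2/4} C_{n,k}.$$
To reach $0.98\, C_{n,k}$ on the right-hand side, it suffices to fix the universal constant $\alpha_1$ small enough (so that $\alpha_1^2/4 \leq \log(C/0.98)$) and to take $n$ above a fixed threshold so that the $o(1)$ correction is harmless. The claimed probability $1 - 4e^{-\sqrt{n}}$ is then automatic, as the inequality holds on the full probability space.

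A variant that literally uses the phrase appearing in the hint preceding the corollary is to invoke the concentration event of Proposition \ref{prop:singular2}: on an event of probability $\geq 1 - 4e^{-\sqrt{n}}$ one has $\lambda_j = 1/\sqrt{2} + t_j$ with $|t_j| \leq C'(\sqrt{\alpha_1}+\sqrt{2})/n^{1/4}$. Since $g'(1/\sqrt{2}) = 0$, a Taylor expansion of $\log g$ around $1/\sqrt{2}$ gives
$$\prod_{j=1}^k \lambda_j\sqrt{1-\lambda_j^2} = 2^{-k}\exp\!\left(-4\sum_{j=1}^k t_j^2 + O\!\left(\sum_{j=1}^k |t_j|^3\right)\right),$$
and $\sum t_j^2 \leq k\cdot C''(\sqrt{\alpha_1}+\sqrt{2})^2/\sqrt{n} \leq C'''\alpha_1$ can be made arbitrarily small. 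This route sharpens the trivial bound $\prod\leq 2^{-k}$ and shows that the constant $0.98$ could in fact be replaced by any constant strictly less than $1$, provided $\alpha_1$ is taken small enough.

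The only real obstacle is bookkeeping of the universal constants. The choice of $\alpha_1$ here must be made in coordination with the constants already constrained earlier in the section (and with Proposition \ref{prop:singular2} and Corollary \ref{cor:sum}), so that the simultaneous choice at the end of the section keeps all inequalities compatible; but the analytic content is elementary, amounting to the maximum of $\lambda\sqrt{1-\lambda^2}$ and a Stirling estimate.
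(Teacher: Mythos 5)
Your first route is correct and is in fact genuinely simpler than the paper's own proof of this corollary. The paper conditions on the concentration event from Proposition~\ref{prop:singular2} (writing $\lambda_i^2 = 1/2 + t_i$, $|t_i| \lesssim n^{-1/4}$), expands
$$\frac{1}{\lambda_i\sqrt{1-\lambda_i^2}} = \frac{2}{\sqrt{1-4t_i^2}},$$
and concludes $\sqrt{\prod_j 1/(\lambda_j\sqrt{1-\lambda_j^2})} \geq 2^{k/2}\exp(-Ck/\sqrt{n} + \cdots)$, after which it chooses $\alpha_1$ so that $0.99\cdot 2^{k/2}C_{n/2,k} \geq 0.98\, C_{n,k}$. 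You instead observe that $\lambda\sqrt{1-\lambda^2}\leq 1/2$ holds deterministically for every $\lambda\in[0,1]$ (with equality at $\lambda=1/\sqrt{2}$), so $\sqrt{\prod_j 1/(\lambda_j\sqrt{1-\lambda_j^2})}\geq 2^{k/2}$ with probability one; combined with Proposition~\ref{prop:det_coefficients} this already yields the corollary, even without the $4e^{-\sqrt{n}}$ exceptional set, and with a weaker constraint on $\alpha_1$ since the $0.99$ loss factor is not needed. Worth noting: the paper's intermediate bound $2^{k/2}\exp(-\cdots)$ actually undershoots the deterministic lower bound $2^{k/2}$, so your argument is not only shorter but genuinely sharper at that step. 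Your second "variant" paragraph correctly reconstructs the paper's concentration-based route and rightly identifies it as nothing more than a sharpening of the trivial bound that you do not need here. The coordination of $\alpha_1$ with the rest of the section, which you flag as the only bookkeeping concern, is indeed handled globally at the end of \S\ref{sec:evidence} and poses no obstacle.
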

	
	\begin{proof}
		Assume that for all $1\leq i\leq k$ we have $\lambda_i^2=1/2+t_i$ where $|t_i|\leq C'(\sqrt{\alpha_1}+\sqrt{2})/n^{1/4}$. By Proposition \ref{prop:singular2}, this event has probability greater than $1-4e^{-\sqrt{n}}$. We have,
		$$\frac{1}{\lambda_i\sqrt{1-\lambda_i^2}}=\sqrt{\frac{1}{(1/2+t_i)(1/2-t_i)}}=\frac{2}{\sqrt{1-4t_i^2}}.$$
		Hence,
		$$\left|\frac{1}{\lambda_i\sqrt{1-\lambda_i^2}}-2\right|\leq \frac{C(\sqrt{\alpha_1}+\sqrt{2})^2}{\sqrt{n}}.$$
		We have
		\begin{align*}
			&\sqrt{\prod_{j=1}^{k}\frac{1}{\lambda_j\sqrt{1-\lambda_j^2}}}\geq 2^{k/2}\textrm{exp}\left(-\frac{C(\sqrt{\alpha_1}+\sqrt{2})^2k}{4\sqrt{n}}+O\left(\frac{k(\sqrt{\alpha_1}+\sqrt{2})^2}{n}\right)\right).
		\end{align*}
		We may assume that $\alpha_1$ is small enough, such that both
		\begin{align*}
			&\textrm{exp}\left(-\frac{C(\sqrt{\alpha_1}+\sqrt{2})^2\alpha_1}{4}+O\left(\frac{\alpha_1(\sqrt{\alpha_1}+\sqrt{2})^2}{\sqrt{n}}\right)\right)
			\geq 0.99,	
		\end{align*}
		and (By Proposition \ref{prop:det_coefficients}),
		$0.99 C_{n/2,k}2^{k/2} \geq 0.98 C_{n,k}$.
		Hence,
		$$C_{n/2,k}\sqrt{\prod_{j=1}^{k}\frac{1}{\lambda_j\sqrt{1-\lambda_j^2}}}\geq 0.99 C_{n/2,k}2^{k/2}\geq 0.98C_{n,k}.$$
	\end{proof}
	
	In order to understand the integral in (\ref{equation:cs}), we write $\bR^k$ as $\rho n^{-1/4}B_k\cup (\bR^k\setminus \rho n^{-1/4}B_k)$ where $\rho>0$. Inside the ball $\rho n^{-1/4}B_k$ the integral is close to $1$. Outside the ball, we show that the integral is negligible.\\

	The estimation inside the ball of radius $\rho n^{-1/4}$ uses standard inequalities and corollary \ref{cor:sum} (see Appendix \ref{app:laplace} for the proof). In this proposition we define an upper bound on $\rho$.
	
	\begin{proposition} \label{prop:small_ball}
		Let $k\leq \alpha_1\sqrt{n}$. Let $\Lambda$ and $U$ be as before. Then with probability greater than $1-4e^{-\sqrt{n}}$
		\begin{align*}
		\int_{\rho n^{-1/4}B_k}&f(x)\left(1-\left|\Lambda^{-1}U^T x\right|^2\right)_+^{(n/2-k-2)/4}
		\left(1-\left|\left(I-\Lambda^2\right)^{-1/2}U^Tx\right|^2\right)_+^{(n/2-k-2)/4}dx\\
		&\geq 0.95 \int_{\rho n^{-1/4}B_k}f(x)\left(1-\left| x\right|^2\right)_+^{(n-k-2)/2}dx.
		\end{align*}
	\end{proposition}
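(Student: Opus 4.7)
The plan is to establish the integrand inequality pointwise for each $x$ with $|x| \leq \rho n^{-1/4}$, on the high-probability event from Corollary \ref{cor:sum}, and then integrate against $f(x) \geq 0$. The key elementary fact is $(1-A)(1-B) \geq 1-(A+B)$ for $A, B \geq 0$, which converts the sum-type estimate from Corollary \ref{cor:sum} into the product-type lower bound that is needed for the left-hand side integrand.

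Setting $A := |\Lambda^{-1}U^Tx|^2$, $B := |(I-\Lambda^2)^{-1/2}U^Tx|^2$, $a := (n/2-k-2)/4$, and $\eta := C(\sqrt{\alpha_1}+\sqrt{2})^2/(4\sqrt{n})$, Corollary \ref{cor:sum} guarantees that with probability at least $1-4e^{-\sqrt{n}}$ the bound $A+B \leq 4|x|^2(1+\eta)$ holds for all $x \in \bR^k$, and I work on this event. For $|x| \leq \rho n^{-1/4}$ and $\rho, \alpha_1 \leq 1$, the quantity $A+B$ is much smaller than $1$, so combining $(1-A)(1-B) \geq 1-(A+B)$ with $\log(1-s) \geq -s-s^2$ (valid for $s \in [0,1/2]$) yields
$$a\log\bigl[(1-A)(1-B)\bigr] \;\geq\; -(n/2-k-2)|x|^2(1+\eta) - 4(n/2-k-2)|x|^4(1+\eta)^2.$$
On the other side, $\log(1-|x|^2) \leq -|x|^2$, so $\log(1-|x|^2)^{(n-k-2)/2} \leq -(n-k-2)|x|^2/2$. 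The difference of the linear-in-$|x|^2$ main terms equals a positive remainder $(k+2)|x|^2/2$, which I discard, leaving
$$\log\frac{(1-A)^a(1-B)^a}{(1-|x|^2)^{(n-k-2)/2}} \;\geq\; -(n/2-k-2)|x|^2\eta - 4(n/2-k-2)|x|^4(1+\eta)^2.$$

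Using $|x|^2 \leq \rho^2/\sqrt{n}$ and $k \leq \alpha_1\sqrt{n}$, the two error terms are bounded by a universal constant (depending on $\alpha_1$) times $\rho^2$ and $\rho^4$ respectively, so the right-hand side is at least $-C_0\rho^2$ for $\rho \leq 1$, where $C_0$ depends polynomially on $\alpha_1$. Choosing $\rho$ small enough that $C_0\rho^2 \leq \log(1/0.95)$ — the upper bound on $\rho$ fixed by this proposition — produces the pointwise inequality $(1-A)^a(1-B)^a \geq 0.95(1-|x|^2)^{(n-k-2)/2}$, which upon multiplication by $f(x) \geq 0$ and integration over $\rho n^{-1/4}B_k$ yields the claim. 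The main subtlety is the bookkeeping: the factor of $4$ in Corollary \ref{cor:sum} (rather than $2$) is precisely what makes the leading $O(n|x|^2)$ contributions on the two sides agree, and the scale $\rho n^{-1/4}$ is tuned so that the residual $O(n|x|^4)$ quadratic Taylor terms remain bounded by $O(\rho^4)$.
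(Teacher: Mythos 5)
Your proof is correct and follows essentially the same strategy as the paper's: work on the high-probability event underlying Corollary \ref{cor:sum}, take logarithms, Taylor-expand to linear order in $|x|^2$, observe that the leading $(n/2-k-2)|x|^2$ versus $(n/2-k/2-1)|x|^2$ terms differ only by a benign $(k/2+1)|x|^2 \geq 0$, and absorb the remaining $O(\rho^2)$ and $O(\rho^4)$ error terms by choosing $\rho$ small. Your insertion of the elementary inequality $(1-A)(1-B)\geq 1-(A+B)$ before taking logs is a minor presentational rearrangement of the paper's step of adding $\log(1-A)+\log(1-B)$ and Taylor-expanding each; both lead to the same error-term bookkeeping.
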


	Using the Laplace method (see Appendix \ref{app:laplace}), we estimate the integral outside $\rho n^{-1/4}B_k$.
	
	\begin{proposition} \label{prop:large_dev}
		Let $k\leq \alpha_1\sqrt{n}$. Then, for any $f:\bR^n\to\bR_+$ with
		$\norm{f}_{\infty}\leq e^{\alpha_2\sqrt{n}}$, we have
		\begin{align*}
			I&=C_{n,k}\int_{\bR^k\setminus \rho n^{-1/4}B_k}f(x)\left(1-|x|^2\right)_+^{(n-k-2)/2}dx
			\leq \frac{2\alpha_1}{\rho^2}e^{-\alpha_2\sqrt{n}}.
		\end{align*}
		The constant $C_{n,k}$ is the same as in Proposition \ref{prop:coarea}, and $\rho>0$ is the same as in Proposition \ref{prop:small_ball}.
	\end{proposition}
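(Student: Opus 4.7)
The plan is to recast $I$ as a spherical tail probability via Proposition~\ref{prop:coarea} and then estimate it via the Laplace method on the polar form of the beta integral.

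First, since $(1-|x|^2)_+^{(n-k-2)/2}$ is supported on $B_k$, define $g: S^{n-1} \to \bR_+$ by $g(y) = f(\pi_k y)\,1_{\{|\pi_k y| \geq \rho n^{-1/4}\}}$, where $\pi_k$ denotes projection onto the first $k$ coordinates. Applying Proposition~\ref{prop:coarea} with $m = n$ to this $g$ (which depends only on the first $k$ coordinates) yields $I = \int_{S^{n-1}} g\, d\sigma_{n-1}$. Bounding $f$ pointwise by $\norm{f}_\infty \leq e^{\alpha_2\sqrt n}$,
\[
I \;\leq\; e^{\alpha_2 \sqrt n}\;\sigma_{n-1}\bigl(\{y \in S^{n-1}: |\pi_k y| \geq \rho n^{-1/4}\}\bigr).
\]
It therefore suffices to show the spherical tail probability is at most $\tfrac{2\alpha_1}{\rho^2}\, e^{-2\alpha_2 \sqrt n}$.

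For this tail, note that $|\pi_k y|^2$ under $\sigma_{n-1}$ has a $\mathrm{Beta}(k/2, (n-k)/2)$ distribution, so in polar coordinates the tail probability equals the ratio
\[
\frac{\int_{\rho n^{-1/4}}^{1} r^{k-1}(1-r^2)^{(n-k-2)/2}\, dr}{\int_{0}^{1} r^{k-1}(1-r^2)^{(n-k-2)/2}\, dr}.
\]
The log-integrand $(k-1)\log r + \tfrac{n-k-2}{2}\log(1-r^2)$ has its maximum at $r_\star \sim \sqrt{k/n} \leq \sqrt{\alpha_1}/n^{1/4}$, which lies strictly below the lower limit $\rho n^{-1/4}$ as soon as $\rho > \sqrt{\alpha_1}$. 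The Laplace method at an interior boundary point then shows that the tail integral is dominated by its value at $r = \rho n^{-1/4}$ divided by $|\partial_r(\text{log-integrand})| \sim \rho\sqrt n$, producing a Gaussian suppression $\exp\bigl(-\rho^2\sqrt n/2 + O(\alpha_1\sqrt n\,\log(\rho^2/\alpha_1))\bigr)$ together with a polynomial prefactor of order $k/(\rho^2\sqrt n) \leq \alpha_1/\rho^2$ after dividing by the full normalization. Choosing $\alpha_2$ sufficiently small in terms of $\alpha_1$ and $\rho$ (as already signalled in the constants-bookkeeping paragraph at the start of this section), the exponential $e^{-\rho^2\sqrt n/2}$ absorbs $e^{2\alpha_2\sqrt n}$, and the claim follows.

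The main obstacle is extracting the polynomial prefactor $2\alpha_1/\rho^2$ and the exponential decay simultaneously. A crude Chernoff bound on the equivalent $\chi^2_k$ tail produces exponential decay but no polynomial refinement, while a first-moment Markov bound produces the $\alpha_1/\rho^2$ factor but no exponential. Only the full Laplace asymptotic (deferred to Appendix~\ref{app:laplace}) delivers both at once, and the bookkeeping of Stirling corrections from $C_{n,k}$ and the beta normalization must be done carefully in order to match the constants in the stated proposition.
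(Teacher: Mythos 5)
Your outline follows essentially the same route the paper takes: bound $f$ pointwise by $\norm{f}_\infty\leq e^{\alpha_2\sqrt n}$, reduce to a one-dimensional radial integral, and run a Laplace-type estimate at the boundary point $r=\rho n^{-1/4}$, observing that the interior maximum of the radial integrand sits at $r_\star\approx\sqrt{k/n}\leq\sqrt{\alpha_1}/n^{1/4}$, which lies to the left of the cutoff. The packaging differs slightly --- you normalize the integral into a $\mathrm{Beta}(k/2,(n-k)/2)$ tail probability and keep the weight $(1-r^2)^{(n-k-2)/2}$ exact, whereas the paper keeps the $C_{n,k}$ factor explicit, first weakens $(1-|x|^2)^{(n-k-2)/2}\leq e^{-n|x|^2/3}$ via $1-x\leq e^{-x}$, and then controls the radial tail with a one-line tangent-line bound using the convexity of $h(r)=-(k-1)\log r+r^2/3$. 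In the end both versions require the same Stirling bookkeeping: your $B(k/2,(n-k)/2)$ is the paper's $C_{n,k}\vol{B_k}=\frac{n-k}{n}\binom{n/2}{k/2}\leq(ne/k)^{k/2}$.

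The gap is that the decisive computation is asserted rather than carried out. Writing ``deferred to Appendix~\ref{app:laplace}'' is circular here, since that appendix is exactly where this proof belongs. More to the point, the prefactor $2\alpha_1/\rho^2$ does not fall out of the Laplace step by itself: one has to multiply out the boundary Laplace factor, the $(ne/k)^{k/2}$ Stirling factor, and the $n^{-k/4}$ from rescaling, collect them into $\frac{2k}{\rho^2\sqrt n}\bigl(e\rho^2\sqrt n/k\bigr)^{k/2}e^{-\sqrt n(\rho^2/3-\alpha_2)}$, and only then optimize over $k\in[1,\alpha_1\sqrt n]$ to obtain the exponent $\rho^2/3-\alpha_1\log(\rho\sqrt{e/\alpha_1})$ and the prefactor $2\alpha_1/\rho^2$. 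This optimization also produces the required smallness constraint on $\alpha_1$ (namely $\rho^2/3-\alpha_1\log(\rho\sqrt{e/\alpha_1})>0$), which is a stronger condition than the $\rho>\sqrt{\alpha_1}$ you state, and the final choice $2\alpha_2<\rho^2/3-\alpha_1\log(\rho\sqrt{e/\alpha_1})$ is what pins down $\alpha_2$. Your sketch correctly identifies the target of all this bookkeeping but stops short of doing it, and that bookkeeping is the actual content of the proposition.
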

	
	\begin{proof}[Proof of Theorem \ref{thm:kcoordinates}]
		By Proposition \ref{prop:distribution} and the Cauchy-Schwarz inequality, the event
		$$\sqrt{\int_{S_H}f(x)d\sigma_H(x)\int_{S_{H^{\perp}}}f(x)d\sigma_{H^{\perp}}(x)}\geq 0.9$$
		has the greater probability than the event
		\begin{align*}
			\frac{C_{n/2,k}}{\sqrt{\prod_{j=1}^k\lambda_j\sqrt{1-\lambda_j^2}}}&
			\int_{\bR^k}f\left(x\right)\left(1-\left|\Lambda^{-1}U^T x\right|^2\right)_+^{(n/2-k-2)/4}
			\left(1-\left|\left(I-\Lambda^2\right)^{-1/2}U^Tx\right|^2\right)_+^{(n/2-k-2)/4}dx
			&\geq 0.9.
		\end{align*}
		
		By Corollary \ref{cor:coefficient} and Proposition \ref{prop:small_ball} with probability greater than $1-4e^{-\sqrt{n}}$ the left hand side is at least
		$$0.93C_{n,k}\int_{\rho n^{-1/4}B_k}f(x)(1-|x|^2)^{(n-k-2)/2}_+dx.$$
		By Proposition \ref{prop:coarea} this is equal to
		\begin{align*}
			0.93\left(\int_{S^{n-1}}f(x)d\sigma_{n-1}(x)-
			C_{n,k}\int_{\bR^k\setminus\rho n^{-1/4}B_k}f(x)(1-|x|^2)^{(n-k-2)/2}_+dx\right).
		\end{align*}
		By Proposition \ref{prop:large_dev} there exists $\hat{C}>0$ such that for all $n>\hat{C}$ we have
		\begin{align*}
			&0.93C_{n,k}\int_{\bR^k\setminus\rho n^{-1/4}B_k}f(x)(1-|x|^2)^{(n-k-2)/4}dx
			\leq 0.01.
		\end{align*}
		Hence,
		\begin{align*}
			&0.93\left(\int_{S^{n-1}}f(x)d\sigma_{n-1}(x)-\right.
			\left.C_{n,k}\int_{\bR^k\setminus\rho n^{-1/4}B_k}f(x)(1-|x|^2)^{(n-k-2)/2}_+dx\right)
			\geq 0.9.
		\end{align*}
	\end{proof}
	
	\appendix
	\section{Protocol for VSP}\label{app:protocol}
	The protocol we present here is a simple modification of the one presented by Raz \cite{raz99}.\\
	As before, $c,c_1,C$ etc. denote positive universal constants.\\
	Let $k=\lfloor e^{\sqrt{n}}\rfloor$. Let $E_1,\dots,E_k\subseteq\bR^n$ be independent random subspaces of dimension $\lfloor C_1\sqrt{n}\rfloor$ chosen uniformly.
	For every $1\leq i \leq k$, let $\mathcal{N}_i=\{\theta^i_1,\dots,\theta^i_m\}$ be independent random vectors in $S^{n-1}\cap E_i$, where $m=\lfloor e^{C_2\sqrt{n}}\rfloor$. Alice and Bob sample $(E_1,\mathcal{N}_1),\dots,(E_k,\mathcal{N}_k)$ in advance and store the results. Each real number stored by Alice and Bob is kept with accuracy of $\log n$ bits.
	The protocol will be the following: Alice chooses a random index $1\leq\hat{i}\leq k$, and then finds the index $1\leq \hat{j}\leq m$ such that
	$$\max_{1\leq j\leq m}\dprod{\theta_j^{\hat{i}}}{u}=\dprod{\theta_{\hat{j}}^{\hat{i}}}{u}.$$
	Alice sends Bob both indices $\hat{i}$ and $\hat{j}$ using at most $\log{k}+\log{m}=(1+C_2)\sqrt{n}$ bits.
	Bob checks the distance of $\theta_{\hat{j}}^{\hat{i}}$ to $H$ and $H^{\perp}$. If $d(\theta_{\hat{j}}^{\hat{i}},H)>d(\theta_{\hat{j}}^{\hat{i}},H)$ then they answer that $u\in H$ otherwise they answer that $u\in H^{\perp}$.\\
	
	In this protocol Alice preforms one measurement. This measurement is in a subspace of dimension $O(\sqrt{n})$, hence the protocol has total rank of $O(\sqrt{n})$.\\
	
	The analysis of this protocol is done in two steps. First we show that the protocol works when we replace $(E_1,\mathcal{N}_1),\dots,(E_k,\mathcal{N}_k)$ with shared random pair $(E,\mathcal{N})$. 
	The complexity of the public coin protocol is $\log m=C_2\sqrt{n}$ bits.
	Second, we eliminate the need for shared randomness by considering $(E_1,\mathcal{N}_1),\dots,(E_k,\mathcal{N}_k)$.
	This step is standard, and the cost of eliminating the shared randomness is another $\log k=\sqrt{n}$ bits.
	We present the main ideas of these steps.\\
	
	In the analysis of the first step, we use two standard results:
	In the first, we use the fact that the norm of a projection of a random vector is close to Gaussian \cite{johnson+lindenstrauss84}. 
	\begin{proposition}\label{prop:random_projection}
		Let $v\in S^{d-1}$ be a random vector distributed uniformly. Let $F\subseteq\bR^d$ be a subspace of dimension $\ell$. Then,
		$$\bP\left(\left||\proj_Fv|^2-\frac{\ell}{d}\right|\geq t\right)\leq Ce^{-ct^2d},\quad\forall t.$$
	\end{proposition}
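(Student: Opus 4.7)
My plan is to reduce the statement to a concentration inequality for Lipschitz functions on the sphere, which is the most direct route. By orthogonal invariance of the uniform measure on $S^{d-1}$, I may assume without loss of generality that $F=\textrm{span}\{e_1,\ldots,e_\ell\}$, so that $|\proj_F v|^2 = v_1^2+\cdots+v_\ell^2$.

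First, I would observe that the function $h(v)=|\proj_F v|^2$ satisfies $\bE h(v) = \ell/d$. This follows immediately from the identity $\sum_{i=1}^d \bE v_i^2 = \bE|v|^2 = 1$ combined with the symmetry $\bE v_i^2 = 1/d$ for all $i$. Next, I would verify that $h$ is $2$-Lipschitz on $S^{d-1}$: the map $v\mapsto |\proj_F v|$ is $1$-Lipschitz as a composition of a norm with a linear contraction, and squaring a function bounded by $1$ doubles the Lipschitz constant, since $|h(v)-h(w)|=||\proj_F v|-|\proj_F w||\cdot ||\proj_F v|+|\proj_F w||\le 2|v-w|$.

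With these two ingredients in hand, the result follows directly from Lévy's concentration inequality on the sphere, which states that for any $L$-Lipschitz $h:S^{d-1}\to\bR$,
$$\bP\left(\,|h(v)-\bE h(v)|\ge t\,\right)\le Ce^{-ct^2 d/L^2}.$$
Applying this with $L=2$ yields the desired bound (after absorbing the factor $4$ into the constant $c$). For $t\ge 1$, the inequality $|h(v)-\ell/d|\le 1$ holds deterministically, so the claimed estimate is trivial there; the content of the statement is for small $t$, which is exactly the regime Lévy's inequality addresses.

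There is really no obstacle here; this is a textbook concentration argument. The only choice to make is between the Lipschitz-on-the-sphere route above and the alternative of writing $v = g/|g|$ with $g$ standard Gaussian and applying Laurent--Massart tail bounds to the independent chi-squared variables $|\proj_F g|^2$ and $|g|^2-|\proj_F g|^2$. Both give the same conclusion; I prefer the Lévy approach since the explicit expectation $\ell/d$ falls out without any normalization bookkeeping.
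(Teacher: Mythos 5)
Your proof is correct, and worth noting is that the paper itself does not prove this proposition at all: it is stated as a standard fact with a citation to Johnson and Lindenstrauss, so there is no in-paper argument to compare against. Your reduction to $F=\textrm{span}\{e_1,\ldots,e_\ell\}$ by rotation invariance, the identity $\bE|\proj_F v|^2=\ell/d$ by symmetry, and the Lipschitz bound $|h(v)-h(w)|\le 2|v-w|$ (using $|\proj_F v|,|\proj_F w|\le 1$) are all sound, and L\'evy's inequality on the sphere then gives exactly the stated tail. One small point of hygiene: the textbook L\'evy inequality is usually stated around the median; passing to the mean shifts by $O(L/\sqrt{d})$, which is harmless here but worth acknowledging when you invoke the mean-centred form with universal $C,c$. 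Your remark that the case $t\ge 1$ is vacuous (since $|h(v)-\ell/d|\le 1$ deterministically) is also correct, though not actually needed since L\'evy already covers all $t$. The alternative you sketch, writing $v=g/|g|$ with $g$ Gaussian and applying $\chi^2$ tail bounds \`a la Laurent--Massart, is equally valid and is arguably closer in spirit to the "norm of a projection is close to Gaussian" phrasing the paper uses when it cites the result; either route is fine.
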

	Note that by applying a random rotation we may assume that $v$ is fixed and $F$ is random.\\
	The second standard result shows that our choice of $\mathcal{N}_i$ is typically an $1/2-$net of $S^{n-1}\cap E_i$.
	\begin{proposition}\label{prop:net}
		Let $z_1,...,z_{\ell}$ be independent uniformly chosen random vectors in $S^{d-1}$, where $\ell=e^{Cd}$. Then with probability greater than $1-e^{-e^{c\ell}}$ they form an $1/2-$net of the sphere.
	\end{proposition}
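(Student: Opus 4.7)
The plan is to use a classical two-scale covering argument. First I would fix a deterministic $(1/4)$-net $\mathcal{M}=\{y_1,\ldots,y_N\}$ of $S^{d-1}$ with cardinality $N \leq 12^d$, which comes from standard volumetric bounds. If every $y_i$ has some sample $z_j$ within Euclidean distance $1/4$, then by the triangle inequality the set $\{z_1,\ldots,z_\ell\}$ is a $1/2$-net of the whole sphere, so it suffices to control the probability that some $y_i \in \mathcal{M}$ is missed by all the samples.

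Next I would establish a spherical cap volume lower bound: for any fixed $y \in S^{d-1}$, the cap $B(y,1/4) \cap S^{d-1}$ has uniform measure at least $p \geq e^{-c_0 d}$ for some absolute constant $c_0 > 0$. This follows from the standard integral representation of the cap measure in terms of $(1-t^2)^{(d-3)/2}$ and routine estimates. Since the $z_j$ are i.i.d.\ uniform on $S^{d-1}$, the probability that all $\ell$ samples avoid this particular cap is at most $(1-p)^\ell \leq e^{-p\ell}$.

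A union bound over $\mathcal{M}$ then gives a failure probability of at most
\[
N\,e^{-p\ell} \;\leq\; 12^{d}\,\exp\!\bigl(-\ell\,e^{-c_0 d}\bigr).
\]
Plugging in $\ell = e^{Cd}$ with $C$ large enough (specifically $C > c_0 + \log 12$), the exponent satisfies $\ell\,e^{-c_0 d} = e^{(C-c_0)d}$, so the double-exponential factor $\exp(-e^{(C-c_0)d})$ comfortably absorbs the $12^d$ prefactor and the overall failure probability is bounded by $\exp(-e^{c'd})$ for some $c' > 0$. Re-expressing $d = C^{-1}\log\ell$ yields the claimed doubly-exponential tail.

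The only nontrivial input here is the spherical cap lower bound $p \geq e^{-c_0 d}$; everything else is bookkeeping. The main subtle point is to take $C$ past the explicit threshold $c_0 + \log 12$ so that the $e^{-p\ell}$ term dominates the $12^d$ covering number. Both ingredients are entirely classical, so I expect no real obstacle in carrying out the argument.
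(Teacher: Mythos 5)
Your proof follows essentially the same approach as the paper's sketch: fix a deterministic net on the sphere, lower-bound the measure of a small cap around each net point by $e^{-c_0 d}$, bound the probability that all $\ell$ samples miss a given cap by $(1-p)^\ell \leq e^{-p\ell}$, and union over the net. The paper's sketch actually glosses over the two-scale point you handle carefully — it bounds the probability that some net point is $\varepsilon$-missed by all $z_i$ but does not explicitly invoke the triangle inequality to pass from ``every net point is close to some $z_i$'' to ``the $z_i$ form a $1/2$-net,'' which is exactly the $1/4$-net-to-$1/2$-net step you make explicit. One caveat about your closing sentence: your computation gives a failure bound of order $\exp(-e^{c'd})$, and since $d = C^{-1}\log\ell$ this is $\exp(-\ell^{c'/C})$, a \emph{stretched} exponential in $\ell$, not the doubly-exponential-in-$\ell$ bound $e^{-e^{c\ell}}$ literally stated in the proposition; but the paper's own sketch yields precisely the same $\exp(c_1 d - e^{c''d})$ form, so the proposition statement appears to contain a typo (presumably $e^{-e^{cd}}$ was intended), and your argument matches what the paper actually proves.
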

	\begin{proof}[Sketch of the proof]
		Let $N$ be an $\varepsilon-$net with $\#N=e^{c_1k}$ (e.g \cite{pisier89}). For any $x\in N$ we have
		$$\bP\left(|z_i-x|>\varepsilon\;\forall i\right)
		\leq e^{-m\bP\left(|z_1-x|\leq\varepsilon\right)}.$$
		Since
		$$\bP\left(|z_1-x|\leq\varepsilon\right)\approx e^{-c_2(1-\varepsilon)^2k},$$
		we have
		$$\bP\left(\exists x\in N;\;|z_i-x|>\varepsilon\;\forall i\right)\leq 
		\exp\left(c_1k-e^{(c-c_2(1-\varepsilon)^2)k}\right).$$	
	\end{proof}
	
	We are now ready to prove that the protocol works with a shared random pair $(E,\mathcal{N})$.
	\begin{proof}
		Let $u,H$ be fixed, such that either $u\in H$ or $u\in H^{\perp}$.
		We have,
		\begin{align}\label{eq:max_projection}
			\max_{\theta\in S^{n-1}\cap E}\dprod{u}{\theta}&=\max_{\theta\in S^{n-1}\cap E}\dprod{u}{\textrm{Proj}_E\theta}
			=\max_{\theta\in S^{n-1}\cap E}\dprod{\textrm{Proj}_E u}{\theta}\nonumber
			=|\textrm{Proj}_E u|.\nonumber
		\end{align}
		
		The dimension of $E$ is $\lfloor C_1\sqrt{n}\rfloor$. By Proposition \ref{prop:random_projection} and (\ref{eq:max_projection}) we can choose $C_1$ big enough such that
		$$\bP\left(\max_{\theta\in S^{n-1}\cap E}\dprod{u}{\theta}\geq \frac{100}{n^{1/4}}\right)\geq 0.91.$$
		Let $\theta_{\hat{j}}\in \mathcal{N}$ be the closest point to $u$. By Proposition \ref{prop:net}, with probability greater than $0.99$, the set $\mathcal{N}=\{\theta_1,\dots,\theta_m\}$ is an $1/2-$net of $S^{n-1}\cap E$. Hence, for any $\theta\in S^{n-1}\cap E$ there exists $\theta_i\in\mathcal{N}$ such that $|\theta_i-\theta|\leq 1/2$. Hence,
		\begin{align*}
			\dprod{\theta}{u}&=\dprod{\theta-\theta_i}{u}+\dprod{\theta_i}{u}
			\leq |\theta-\theta_i||\textrm{Proj}_E u|+\max_{j}\dprod{\theta_j}{u}
			\leq \frac{1}{2}|\textrm{Proj}_E u|+\max_{j}\dprod{\theta_j}{u}.
		\end{align*}
		The right hand side does not depend on $\theta$, hence,
		\begin{align*}
			\max_{j}\dprod{\theta_j}{u}&\geq \max_{\theta\in S^{n-1}\cap E}\dprod{\theta}{u}-\frac{1}{2}|\textrm{Proj}_E u|
			=\frac{1}{2}\max_{\theta\in S^{n-1}\cap E}\dprod{\theta}{u}.
		\end{align*}
		Let $\alpha=\dprod{\theta_{\hat{j}}}{u}$. With probability greater than $0.9$ we have
		$$\alpha\geq \frac{50}{n^{1/4}}.$$
		Let
		$$\theta_{\hat{j}}=\alpha u+\sqrt{1-\alpha^2}v,$$
		where $v\in S^{n-1}\cap u^{\perp}$. By the definition $v$, it is distributed uniformly in $S^{n-1}\cap u^{\perp}$. By Proposition \ref{prop:random_projection} we have
		$$\bP\left(\left||\proj_Hv|^2-\frac{1}{2}\right|\geq \frac{10}{\sqrt{n}}\right)\leq 0.1.$$
		Hence, if $u\in H$, then with probability greater than $0.8$ we have,
		$$|\textrm{Proj}_H \theta_{\hat{j} }|^2=\alpha^2+(1-\alpha^2)|\textrm{Proj}_H v|^2\geq \frac{1}{2}+\frac{1000}{\sqrt{n}},$$
		and
		$$|\proj_{H^{\perp}} \theta_{\hat{j}}|^2\leq |\textrm{Proj}_{H^{\perp}} v|^2 \leq \frac{1}{2}+\frac{10}{\sqrt{n}}.$$	
		Hence, with probability greater than $0.8$ we have $|\proj_{H^{\perp}} \theta_{\hat{j}}|<|\textrm{Proj}_H \theta_{\hat{j} }|$, thus the protocol would correctly determine that $u\in H$.
		The case $u\in H^{\perp}$ is proven similarly.
	\end{proof}
	
	Next we explain how to eliminate the shared randomness.
	\begin{proof}
		We denote by $\theta_j\in\mathcal{N}$ the closest vector to $u$ in $\mathcal{N}$. Let
		$$A=\left\{(u,H,E,\mathcal{N});\;
		\begin{matrix}
		|\textrm{Proj}_H\theta_j|^2> |\textrm{Proj}_{H^{\perp}}\theta_j|^2+10/\sqrt{n},\;
		\textrm{if } u\in H\\
		|\textrm{Proj}_{H^{\perp}}\theta_j|^2> |\textrm{Proj}_H\theta_j|^2+10/\sqrt{n},\;
		\textrm{if } u\in H^{\perp}
		\end{matrix}\right\}.$$
		Let $A_{u,H}$ and $A_{E,\mathcal{N}}$ denote the corresponding sections of the set $A$.
		By the previous step of shared randomness, for any fixed $(u,H)$ we have,
		$$\bP_{(E,\mathcal{N})}\left((u,H)\in A_{E,\mathcal{N}}\right)\geq 0.8.$$
		By the Chernoff-Hoeffding inequality, for any fixed $u,H$ we have
		$$\bP\left(\frac{\#\{i;\;(E_i,\mathcal{N}_i)\in A_{u,H}\}}{m}\leq 0.8\right)\leq e^{-ck}.$$
		Hence, by Fubini's theorem, for most choices of $(E_1,\mathcal{N}_1),\dots,(E_k,\mathcal{N}_k)$
		$$\bP_{u,H}\left(\frac{\#\{i;\;(E_i,\mathcal{N}_i)\in A_{u,H}\}}{m}\leq 0.8\right)\leq e^{-c'k}.$$
		Recall that Alice and Bob sample in advance the list $(E_1,\mathcal{N}_1),\dots,(E_k,\mathcal{N}_k)$. 
		Thus, with high probability, their protocol works for a any $(u,H)$ outside a set of measure $e^{-c'e^{\sqrt{n}}}$. 
		Hence, for any vector $u$ and a subspace $H$ we can find $u'$ and $H'$ for which the protocol works, $|u-u'|\leq 1/\sqrt{n}$ and $\norm{\proj_H-\proj_{H'}}_{op}\leq 1/\sqrt{n}$.
		Therefore $|\textrm{Proj}_H u- \textrm{Proj}_{H'} u'|\leq 2/\sqrt{n}$ and the protocol works for arbitrary $u$ and $H$.
	\end{proof}
	
	\section{Asymptotic estimates}\label{app:laplace}
	Here we present the proofs of Propositions \ref{prop:small_ball} and \ref{prop:large_dev}.
	
	\begin{proof}[proof of Proposition \ref{prop:small_ball}]
		Assume the event $\norm{\Lambda-I/\sqrt{2}}_{op}\leq C(\sqrt{\alpha_1}+\sqrt{2})/n^{1/4}$ holds true. By Proposition \ref{prop:singular2} this event has probability greater than $1-4e^{-\sqrt{n}}$.
		Define $\psi:\bR^{n}\to\bR$ by
		\begin{align*}
			\psi(x)&=\left(1-\left|\Lambda^{-1}U^T x\right|^2\right)_+^{(n/2-k-2)/4}
			\left(1-\left|\left(I-\Lambda^2\right)^{-1/2}U^Tx\right|^2\right)_+^{(n/2-k-2)/4}.
		\end{align*}
		Using the Taylor expansion 
		$$\log(1-|x|^2)=-|x|^2+O(|x|^4)$$
		for any $|x|<3/4$, we have
		
		\begin{align*}
			\psi(x)=\exp\left(-(n/8-k/4-1/2)\left(\left|U\Lambda^{-1}U^Tx\right|^2+\left|U(I-\Lambda^{2})^{-1/2}U^Tx\right|^2+O(|x|^4)\right)\right),
		\end{align*}
		for any $|x|\leq 1/10$.
		By Corollary \ref{cor:sum} for any $x\in \rho n^{-1/4}B_k$ we have
		\begin{align*}
			\psi(x)=\exp&\left(-(n-k-2)|x|^2/2 +O(\rho^2(\sqrt{\alpha_1}+\sqrt{2})^2) 
			+O(\rho^4)+O\left((\alpha_1+1/\sqrt{n})\rho^2\right)\right).
		\end{align*}
		In Corollary \ref{cor:coefficient} we assumed an upper bound on $\alpha_1$. 
		Under this upper bound assumption we can choose $\rho>0$ small enough, independent of $n$ and any specific choice of $\alpha_1$ such that
		\begin{align*}
			\psi(x)&\geq 0.95\exp\left(-(n/2-k/2-1)|x|^2\right)
			\geq 0.95(1-|x|^2)^{(n-k-2)/2},\quad \forall x\in \rho n^{-1/4}B_k.
		\end{align*}
	\end{proof}
	
	\begin{proof}[proof of Proposition \ref{prop:large_dev}]
		By the assumption on $f$ we have
		$$I\leq C_{n,k}e^{\alpha_2\sqrt{n}}\int_{\bR^k\setminus \rho n^{-1/4}B_k}\left(1-|x|^2\right)_+^{(n-k-2)/2}dx$$
		Using $1-x\leq e^{-x}$ and the assumption $k\leq\alpha_1\sqrt{n}$ and that $\alpha_1$ is bounded by some universal constant, for $n$ exceeding some universal constant, we have
		\begin{align*}
			I&\leq C_{n,k}e^{\alpha_2\sqrt{n}}\int_{\bR^k\setminus \rho n^{-1/4}B_k}e^{-(n/2-k/2-1)|x|^2}dx
			\leq C_{n,k}e^{\alpha_2\sqrt{n}}\int_{\bR^k\setminus \rho n^{-1/4}B_k}e^{-n|x|^2/3}dx.
		\end{align*}
		By integrating in polar coordinates, we have
		\begin{align*}
			I&\leq C_{n,k}k\vol{B_k}e^{\alpha_2\sqrt{n}}\int_{\rho n^{-1/4}}^{\infty}r^{k-1}e^{-nr^2/3}dr
			= C_{n,k}k\vol{B_k}e^{\alpha_2\sqrt{n}}\frac{1}{n^{k/2}}\int_{\rho n^{1/4}}^{\infty}r^{k-1}e^{-r^2/3}dr.
		\end{align*}
		Define $h(r)=-(k-1)\log r+r^2/3$. The function $h$ is convex, hence
		$$h(r) \geq h\left(\rho n^{1/4}\right)+h'\left(\rho n^{1/4}\right)\left(r-\rho n^{1/4}\right).$$
		Assuming $\alpha_1\leq \rho^2/6$ we have,
		$$h'\left(\rho n^{1/4}\right)=-\frac{k-1}{\rho n^{1/4}}+\frac{2}{3}\rho n^{1/4}\geq \frac{1}{2}\rho n^{1/4}.$$
		We have,
		\begin{align*}
			\int_{\rho n^{1/4}}^{\infty}e^{-h(r)}dr&\leq e^{-h\left(\rho n^{1/4}\right)}\int_{\rho n^{1/4}}^{\infty}e^{-\rho n^{1/4}\left(r-\rho n^{1/4}\right)/2}dr
			=\frac{2}{\rho n^{1/4}}e^{-h\left(\rho n^{1/4}\right)}.
		\end{align*}
		Hence,
		$$I\leq 2C_{n,k}\vol{B_k}\rho^{k-2}kn^{-k/4-1/2}e^{-\sqrt{n}(\rho^2/3-\alpha_2)}.$$
		Using
		\begin{align*}
			C_{n,k}\textrm{Vol}(B_k)&=\frac{n-k}{n}\frac{\textrm{Vol}(B_{n-k})\textrm{Vol}(B_k)}{\textrm{Vol}(B_n)}
			=\frac{n-k}{n}\binom{n/2}{k/2}\leq \left(\frac{n\cdot e}{k}\right)^{k/2},
		\end{align*}
		we have
		$$I\leq \frac{2}{\rho^2}(\sqrt{e}\rho)^k\frac{n^{k/4-1/2}}{k^{k/2-1}}e^{-\sqrt{n}(\rho^2/3-\alpha_2)}.$$
		Assuming $\alpha_1>0$ is small enough such that $\rho^2/3-\alpha_1\log(\rho\sqrt{e/\alpha_1})>0$, we optimize over $k$, and get
		$$I\leq\frac{2\alpha_1}{\rho^2}\textrm{exp}\left[-\sqrt{n}\left(\rho^2/3-\alpha_1\log(\rho\sqrt{e/\alpha_1})-\alpha_2\right)\right].$$	
		Hence, we can choose
		$$2\alpha_2<\rho^2/3-\alpha_1\log(\rho\sqrt{e/\alpha_1}),$$
		to finish the proof.
	\end{proof}

\bibliography{communication}
\bibliographystyle{plain}

\end{document}